\theoremstyle{definition}
\newtheorem{assumption}{Assumption}
\newtheorem{definition}{Definition}
\newtheorem{remark}{Remark}
\newtheorem{problem}{Problem}
\theoremstyle{plain}
\newtheorem{theorem}{Theorem}
\newtheorem{lemma}{Lemma}
\title{\LARGE \bf On Robust Control of Partially Observed Uncertain Systems with Additive Costs}
\author{Aditya Dave, {\itshape{Student Member, IEEE,}} Nishanth Venkatesh, {\itshape{Student Member, IEEE,}}  \\
Andreas A. Malikopoulos, {\itshape{Senior Member, IEEE}} 
	\thanks{This research was supported by NSF under Grants CNS-2149520 and CMMI-2219761. %\thanks{
	The authors are with the Department of Mechanical Engineering, University of Delaware, Newark, DE 19716 USA (email: \texttt{adidave@udel.edu; nish@udel.edu; andreas@udel.edu).}} }
\begin{document}

\maketitle
\thispagestyle{empty}

\begin{abstract}
In this paper, we consider the problem of optimizing the worst-case behavior of a partially observed system. All uncontrolled disturbances are modeled as finite-valued uncertain variables. %We seek to derive a control strategy that minimizes the maximum possible total cost across a time horizon. %Typically, this optimal strategy is computed by converting the problem into a terminal cost problem and utilizing the corresponding dynamic programming (DP) decomposition. 
Using the theory of cost distributions, we present a dynamic programming (DP) approach to compute a control strategy that minimizes the maximum possible total cost over a given time horizon. %However, this DP is computationally inefficient because it utilizes the entire memory at each time step and thus, 
To improve the computational efficiency of the optimal DP, we introduce a general definition for information states and show that many information states constructed in previous research efforts are special cases of ours. Additionally, we define approximate information states and an approximate DP that can further improve computational tractability by conceding a bounded performance loss. %We use a numerical example to illustrate the tradeoff between performance loss and computational improvement. 
We illustrate the utility of these results using a numerical example.
\end{abstract}

\section{Introduction}
\label{section:Introduction}

In engineering applications, it is common for an agent to operate with limited knowledge of the system state and uncertain system dynamics \cite{Malikopoulos2021}. This decision-making challenge is typically modeled as a stochastic control problem, where the agent computes a control strategy to minimizes an expected total cost across a time horizon given a prior probability distribution for all uncertainties. This approach has also been utilized in reinforcement learning \cite{subramanian2022approximate} and decentralized systems \cite{Dave2021nestedaccess}.
However, the expected total cost may not be an adequate measure of performance in all situations. In fact, many applications require guarantees on a system's worst-case performance, for example: (1) control of systems under attack from an adversary, like cyber-security systems \cite{rasouli2018scalable}, and (2) control of systems where a single event of failure can be damaging, like water reservoirs \cite{giuliani2021state}. Furthermore, the performance of a stochastic control strategy degrades rapidly with a mismatch between the assumed prior distribution and the actual underlying distribution \cite{mannor2007bias}. Consequently, stochastic models are unsuitable for strategy computation when prior distributions are ambigious. %, for example, in instances of sparse prior data.

%stochastic models are less suitable when the prior data on system uncertainties is sparse because their performance is very sensitive to a mismatch between estimated prior distribution and the real underlying distribution 

For such applications, we can instead utilize a non-stochastic formulation, where the agent only has access to the feasible sets for all uncertainties, without knowledge of probability distributions. This non-stochastic approach has been utilized in robust control \cite{james1994risk, bernhard2003minimax, bernhard1996separation}, information theory \cite{nair2013nonstochastic, rangi2019towards}, reinforcement learning \cite{gradu2020non, Dave2023approximate}, and decentralized systems \cite{gagrani2017decentralized, Dave2021minimax}. In this paper, we focus on a centralized non-stochastic control problem where an agent seeks a control strategy to minimize a \textit{maximum} possible cost over a finite-time horizon. It is known that the optimal strategy in such problems can be computed with an offline dynamic program (DP) \cite{bernhard2003minimax}. However, the growth in the agent's memory with time makes this challenging because the agent's action is a function of the memory and thus, the DP requires solving one optimization problem at each time for each possible realization of the memory. Using an \textit{information state} can address this challenge. Two well known non-stochastic information states are the \textit{conditional range} %, a set valued information state 
for terminal cost problems \cite{bertsekas1973sufficiently, bacsar2008h} and %(2)
the \textit{maximum cost-to-come} %, a function valued information state 
for additive cost problems \cite{james1994risk, bernhard2003minimax}. In robust stochastic problems \cite{wiesemann2013robust} concerns of partial observation have also been addressed using a conditional range \cite{rasouli2018robust}.
Generalized approximate information states for terminal cost problems were developed in \cite{Dave2022approx}.
However, to the best of our knowledge, there is no notion of approximate information states for non-stochastic \textit{additive} cost problems.

The main contributions of this paper are: (1) for additive cost problems, we introduce general information states to compute an optimal strategy (Theorem \ref{thm_opt_ad_dp}), and (2) we define approximate information states to compute an approximate strategy with a bounded performance loss (Theorems \ref{thm_approx_ad_dp} - \ref{thm_approx_ad_policy}). 

The remainder of the paper proceeds as follows. In Section \ref{section:problem}, we present our model. In Section \ref{section:info_state}, we define information states and the corresponding DP. In Section \ref{section:approx}, we define approximate information states, the approximate DP, and derive performance bounds. In Section \ref{section:example}, we present a numerical example to illustrate our results.
Finally, in Section \ref{section:conclusion}, we draw concluding remarks and discuss ongoing work.

\section{Model}
\label{section:problem}

\subsection{Notation and Preliminaries}

We use the non-stochastic framework of \textit{uncertain variables} from \cite{nair2013nonstochastic}.
%An uncertain variable takes values in a known set with an unknown distribution. 
For a sample space $\Omega$ and a set $\mathscr{X}$, an uncertain variable is a mapping $\mathsf{X}: \Omega \to \mathscr{X}$ written concisely as $\mathsf{X} \in \mathscr{X}$. For any $\omega \in \Omega$, its realization is $\mathsf{X}(\omega) = \mathsf{x} \in \mathscr{X}$. 
The \textit{marginal range} of an uncertain variable $\mathsf{X}$ is the set $[[\mathsf{X}]] \hspace{-1pt} := \hspace{-1pt} \{\mathsf{X}(\omega) \; | \; \omega \in \Omega\}$. 
The \textit{joint range} of two uncertain variables $\mathsf{X} \in \mathscr{X}$ and $\mathsf{Y} \hspace{-1pt} \in \hspace{-1pt} \mathscr{Y}$ is $[[\mathsf{X},\mathsf{Y}]] \hspace{-1pt} := \hspace{-1pt} \{ (\mathsf{X}(\omega), \mathsf{Y}(\omega)) \; | \; \omega \in \Omega \}$. 
The \textit{conditional range} of $\mathsf{X}$ given a realization $\mathsf{y}$ of $\mathsf{Y}$ is $[[\mathsf{X}|\mathsf{y}]] \hspace{-1pt} := \{ \mathsf{X}(\omega) \; | \; \mathsf{Y}(\omega) = \mathsf{y}, $ $ \omega \in \Omega \}$, and $[[\mathsf{X}|\mathsf{Y}]] \hspace{-1pt} := \hspace{-1pt} \{ [[\mathsf{X}|\mathsf{y}]]\; | \; \mathsf{y} \in [[\mathsf{Y}]] \}$. %In subsequent sections, we often simplify the notation by simply writing an uncertain variable as $\mathsf{X} \in \mathscr{X}$ unless we require the dependence on $\Omega$.
%suppress the dependence of an uncertain variable $\mathsf{X}$ on $\Omega$ to simplify the notation, and write it as $\mathsf{X} \in \mathscr{X}$.
Next, consider two compact, nonempty subsets $\mathscr{X}, \mathscr{Y}$ of a metric space $(\mathscr{S}, d)$, where $d(\cdot,\cdot)$ is the metric. Then, the Hausdorff distance \cite[Chapter 1.12]{barnsley2006superfractals} between the sets is
$\mathcal{H}(\mathscr{X}, \mathscr{Y}) \hspace{-2pt} := \hspace{-2pt} \max \hspace{-1pt} \{ \hspace{-1pt} \max_{\mathsf{x} \in \mathscr{X}} \hspace{-1pt} \min_{\mathsf{y} \in \mathscr{Y}}  d(\mathsf{x}, \mathsf{y}), \max_{\mathsf{y} \in \mathscr{Y}} \hspace{-1pt} \min_{\mathsf{x} \in \mathscr{X}}  d(\mathsf{x}, \mathsf{y}) \hspace{-2pt} \}.$

\subsection{Problem Formulation}

We consider an agent who controls the evolution of a system over $T \in \mathbb{N}$ discrete time steps. At any time $t=0,\dots, T$, the system is denoted by an uncertain variable $X_t \in \mathcal{X}$ and the agent's action is denoted by an uncertain variable $U_t \in \mathcal{U}$. At each $t$, the system also receives an uncontrolled disturbance $W_t \in \mathcal{W}$. Starting with an initial state $X_0 \in \mathcal{X}$, the state evolves as $X_{t+1}=f_t\left(X_t,U_t,W_t\right)$ for all $t=0,\dots,T-1$. Before selecting the control action at each $t$, the agent partially observes the system state as $Y_t=h_t(X_t,N_t) \in \mathcal{Y}$, where $N_t \in \mathcal{N}$ is a noise. 

\begin{remark}
We denote generic uncertain variables by sans-serif upper case alphabets $\mathsf{X} \in \mathscr{X}$ and $\mathsf{Y} \in \mathscr{Y}$, whereas, we denote the state and observation at any $t$ by italicized upper-case alphabets $X_t \in \mathcal{X}$ and $Y_t \in \mathcal{Y}$, respectively.
\end{remark}

At each $t=0,\dots,T$, the agent stores the history of observations and control actions in their memory, denoted by $M_t := (Y_{0:t}, U_{0:t-1}) \in \mathcal{M}_t$, where $Y_{0:t} := (Y_0,\dots,Y_t)$. Then, the agent selects an action
$U_t = g_t(M_t)$ using a control law $g_t: \mathcal{M}_t \to \mathcal{U}$ and incurs a cost $c_t(X_t, U_t) \in \mathbb{R}_{\geq0}$. We denote the control strategy by $\boldsymbol{g} := (g_0,\dots,g_T) \in \mathcal{G}$ and measure its performance using the \textit{worst-case criterion:}
\begin{align} \label{eq_ad_criterion}
    \mathcal{J}(\boldsymbol{g}) := \max_{\substack{x_0 \in \mathcal{X}, n_{0:T} \in \mathcal{N}^T, \\ w_{0:T-1} \in \mathcal{W}^{T-1}}} \sum_{t=0}^T c_t(X_t, U_t).
\end{align}
In \eqref{eq_ad_criterion}, we maximize the total cost over all feasible realizations of the \textit{uncontrolled inputs}, i.e., initial state $X_0$, noises $\{N_t~|~ t=0,\dots,T\}$, and disturbances $\{W_t~|~ t=0,\dots,T-1\}$ because they determine all other variables in the system. %This is sufficient because all other uncertain variables are completely determined by these realizations through the dynamics $\{f_t, h_t ~|~ t=0,\dots,T\}$ and the choice of strategy $\boldsymbol{g}$. 
Next, we state the agent's optimization problem.

\begin{problem} \label{problem_1}
We seek to efficiently compute an optimal strategy $\boldsymbol{g}^* = \arg\min_{\boldsymbol{g} \in \mathcal{G}} \mathcal{J}(\boldsymbol{g}),$
given the sets $\{\mathcal{X}, \mathcal{U}, \mathcal{Y}, \mathcal{W}, \mathcal{N}\}$ and the functions $\{f_t, h_t, c_t~|~ t=0,\dots,T\}$.
\end{problem}

We impose the following assumptions on our model:

\begin{assumption} \label{assumption_primitive}
Each uncontrolled input is independent (see \cite[Definintion 2.1]{nair2013nonstochastic}) of all other uncontrolled inputs.
\end{assumption}

Assumption \ref{assumption_primitive} ensures that the system evolution is Markovian in a non-stochastic sense (see \cite[Definintion 2.2]{nair2013nonstochastic}). This assumption will help develop our results.

\begin{assumption} \label{assumption_finitie}
Each feasible set $\{\mathcal{X}, \mathcal{U}, \mathcal{Y}, \mathcal{W}, \mathcal{N}\}$ is a finite subset of a metric space $(\mathcal{S}, d)$.
\end{assumption}

Assumption \ref{assumption_finitie} ensures that all extrema are well defined and that an optimal solution to Problem \ref{problem_1} exists. We will use the metric $d(\cdot, \cdot)$ in Section IV to quantify the distance between two elements in any set.

%Assumption \ref{assumption_1} allows us to measure the distance between any two realizations of $X_t$ and $Y_t$, respectively, for all $t$. To this end, we denote a generic metric by $d(\cdot, \cdot)$. We make this assumption to simplify the exposition of our results.

%ensures that we can quantify the distance between two realizations of any uncertain variable in the system dynamics. As an illustration, consider two possible realizations $x^1_t, x^2_t \in \mathcal{X}$ of the uncertain variable $X_t$. The distance between the realizations is measured using the metric for $\mathcal{X}$ and is denoted by  $d(x^1_t, x^2_t)$.

\begin{assumption} \label{assumption_2}

All uncertain variables and the cost $c_t(X_t, U_t)$ have a finite maximum value at each $t$.

\end{assumption}

Assumption \ref{assumption_2}, in addition to the finiteness of all feasible sets, ensures that the functions $\{f_t, h_t, c_t ~|~ t=0,\dots,T\}$ are globally Lipschitz . To this end, we will denote the Lipschitz constant of a function $f_t$ by $L_{f_t} \in \mathbb{R}_{\geq0}$.

\section{Dynamic Programs and Information States} \label{section:info_state}

In this section, we first present a standard terminal cost DP which can obtain the optimal strategy in Problem \ref{problem_1}. Then, in Subsection \ref{subsection:specialized_DP}, we construct a DP which is specialized to the additive cost criterion in \eqref{eq_ad_criterion}, and in Subsection \ref{subsection:info_state}, we define information states to simplify it. To begin, we transform Problem \ref{problem_1} into a terminal cost problem by augmenting the state $X_t$ at each $t$ with the \textit{accrued cost}
\begin{gather} \label{accrued_def}
    A_t := \sum_{\ell = 0}^{t-1} c_\ell(X_\ell, U_{\ell}),
\end{gather}
which takes values in a finite set $\mathcal{A}_t \subset \mathbb{R}_{\geq0}$. Starting with $A_0 := 0$, the accrued cost evolves as $A_{t+1} = A_t + c_t(X_t, U_t)$ for all $t=0,\dots,T-1$. Thus, the augmented state $(X_t, A_t)$ evolves as a controlled Markov chain. Furthermore, note that the performance criterion \eqref{eq_ad_criterion} can be written as a function of the terminal augmented state $(X_T, A_T)$, i.e., $\mathcal{J}(\boldsymbol{g}) = \max_{x_0, n_{0:T}, w_{0:T-1}} \big(c_T(X_T, U_T) + A_T\big)$. This construction yields a terminal cost optimization problem in $\boldsymbol{g} \in \mathcal{G}$, where the optimal strategy can be computed using a memory based terminal cost DP \cite{Dave2022approx}, as follows. For all $m_t \in \mathcal{M}_t$ and $u_t \in \mathcal{U}$, for all $t=0,\dots,T-1$, we define the value functions
\begin{gather}
    Q_t^{\text{tm}}(m_t,u_t) := \max_{m_{t+1} \in [[M_{t+1}|m_t, u_t]]} V_{t+1}^{\text{tm}}(m_{t+1}), \label{DP_ad_basic_1} \\
    V_t^{\text{tm}}(m_t) := \min_{u_t \in \mathcal{U}} Q_t^{\text{tm}}(m_t,u_t), \label{DP_ad_basic_2}
\end{gather}
where, at time $T$, $Q_T^{\text{tm}}(\hspace{-1pt}m_T,u_T\hspace{-1pt}) \hspace{-2pt} := \hspace{-2pt} \max_{a_T, x_T \in [[A_T, X_T|m_T, u_T]]}$ $\big(c_T(x_T,u_T) + a_T\big)$ and $V_T^{\text{tm}}(m_T) := \min_{u_T \in \mathcal{U}} Q_T(m_T,u_T)$. The control law at each $t$ is $g_t^{\text{tm}}(m_t) := \arg \min_{u_t \in \mathcal{U}}$ $Q_t(m_t, u_t)$.
Using standard arguments, we can conclude that the resulting control strategy $\boldsymbol{g}^{\text{tm}} = (g_0^{\text{tm}},\dots,$ $g_T^{\text{tm}})$ is an optimal solution to the terminal cost problem as well as Problem \ref{problem_1} \cite{bertsekas1973sufficiently}. However, note that the right hand side (RHS) of \eqref{DP_ad_basic_2} involves solving a minimization problem for each possible realization $m_t \in \mathcal{M}_t$, at each $t$. The number of possible realizations $|\mathcal{M}_t|$ increases with time as the agent receives more observations, and consequently, the DP requires a large number of computations for a longer horizon $T$. To address this, we formulate a DP specialized for additive cost problems in Subsection \ref{subsection:specialized_DP} and simplify it using \textit{information states} in Subsection \ref{subsection:info_state}. We will show (Remark \ref{remark:improvement}) that the specialized DP allows us to define more computationally efficient information states than \eqref{DP_ad_basic_1} - \eqref{DP_ad_basic_2}.
%cannot obtain information states specialized for additive cost problems directly from the DP decomposition in \eqref{DP_ad_basic_1} - \eqref{DP_ad_basic_2}. Thus, i
To this end, we present a theory of cost distributions in the next subsection which is required to construct the specialized DP.

\subsection{Cost distributions} \label{subsection:cost_distributions}

In this subsection, we develop the mathematical framework of \textit{cost distributions} for finite uncertain variables. Cost distributions were originally defined for $(\max,+)$ algebra \cite{kolokoltsov1997idempotent}, and applied to robust control problems \cite{bernhard1996separation, bernhard2003minimax} independently from the framework of uncertain variables. A cost distribution is a non-stochastic analogue of a probability distribution.
Specifically, for a finite sample space $\Omega$ with a sigma algebra $\mathcal{B}(\Omega)$, a cost distribution is a function $q: \mathcal{B}(\Omega) \to \{-\infty\} \cup (-\infty,0]$ satisfying the properties: (1) $q(\Omega) = 0$, (2) $q(\emptyset) = - \infty$, and (3) $q(B) = \max_{\omega \in B} q(\omega)$ for all $B \in \mathcal{B}(\Omega)$, where, by convention, the maximum over an empty set is $- \infty$. Furthermore, for two sets $B^1, B^2 \in \mathcal{B}(\Omega)$ with $q(B^2) > - \infty$, the conditional cost distribution of $B^1$ given $B^2$ is 
$q(B^1|B^2) := q(B^1, B^2) - q(B^2),$ where $q(B^1, B^2) = \max_{\omega \in B^1 \cap B^2} q(\omega).$
Next, we extend this definition to include finite uncertain variables.

\begin{definition} \label{cdist_definition}
Let $\mathsf{X}: \Omega \to \mathscr{X}$ and $\mathsf{Y}: \Omega \to \mathscr{Y}$ be two finite uncertain variables. The \textit{cost distribution} for any realization $\mathsf{x} \in \mathscr{X}$ is
$q(\mathsf{x}) := \max_{\omega \in \{\Omega|\mathsf{X}(\omega) = \mathsf{x}\}} q(\omega)$, and that for any $\mathsf{x} \in \mathscr{X}$ given a realization $\mathsf{y} \in \mathscr{Y}$ with $q(\mathsf{y}) > - \infty$ is $q(\mathsf{x}|\mathsf{y}) = q(\mathsf{x},\mathsf{y}) - q(\mathsf{y})$, where $q(x,y) = \max_{\omega \in \{\Omega|X(\omega)=x, Y(\omega) = y\}} q(\omega)$.
    %\\ q(B) &:= \max_{x \in B} q(x), \quad \forall B \in \mathcal{B}(\mathcal{X}).
\end{definition}

%For two uncertain variables $X \in \mathcal{X}$ and $Y \in \mathcal{Y}$, we define the conditional cost distribution of receiving a realization $x \in \mathcal{X}$ given the realization $y \in \mathcal{Y}$ with $q(y) > - \infty$ as
%\begin{gather}
%    q(x|y) := \max_{\omega \in \{\Omega|X(\omega) = x\}} q(\omega|B^y),
%\end{gather}
%where $B^y := \{\Omega~|~Y(\omega) = y\}$. This definition yields that
%\begin{gather}
%    q(x|y) = q(x,y) - q(y). \quad \forall x \in \mathcal{X}, y \in \mathcal{Y},
%\end{gather}
%which is the non-stochastic analogue of Bayes' theorem.

Any cost distribution given by Definition \ref{cdist_definition} satisfies the following useful properties. %that will be useful in the proof of Theorem \ref{thm_equivalence} in Subsection \ref{subsection:info_state}. %in the proof of Theorem \ref{thm_opt_ad_dp} and in
% in the proof of Theorems \ref{thm_approx_ad_dp} - \ref{thm_approx_ad_policy}.

\begin{lemma} \label{lem_cdist_property}
Let $(\Omega, \mathcal{B}(\Omega))$ have a cost distribution $q: \mathcal{B}(\Omega) \to \{-\infty\} \cup (-\infty, 0]$. Let $\mathsf{X}: \Omega \to \mathscr{X}$ and $\mathsf{Y}: \Omega \to \mathscr{Y}$ be two finite uncertain variables and let $f: \mathscr{X} \to \mathscr{Y}$ such that $\mathsf{Y} = f(\mathsf{X})$ and $f^{-1}(\mathsf{y}) \neq \emptyset$ for all $\mathsf{y} \in \mathscr{Y}$. Then, 
\begin{gather} \label{eq_cdist_1}
    q(\mathsf{y}) = \max_{\mathsf{x} \in \{\mathscr{X}|f(\mathsf{x}) = \mathsf{y}\}} q(\mathsf{x}), \quad \forall \mathsf{y} \in \mathscr{Y},
\end{gather}
and furthermore, for any function $g : \mathscr{Y} \to \mathbb{R}_{\geq0}$, 
\begin{gather} \label{eq_cdist_2}
    \max_{\mathsf{x} \in \mathscr{X}} \big( g(f(\mathsf{x})) + q(\mathsf{x}) \big) = \max_{\mathsf{y} \in \mathscr{Y}} \big( g(\mathsf{y}) + q(\mathsf{y}) \big).
\end{gather}
\end{lemma}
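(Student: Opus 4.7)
The plan is to unpack both claims directly from Definition \ref{cdist_definition} and the fact that the ``$\max$ of a $\max$'' over a partition equals a single $\max$. The main mechanism is the partition of the preimage $\{\omega \in \Omega \mid \mathsf{Y}(\omega) = \mathsf{y}\}$ induced by the fibers of $f$ applied to $\mathsf{X}$; once this is observed, both identities follow by elementary max-algebra without any real estimation.

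For \eqref{eq_cdist_1}, I would start from Definition \ref{cdist_definition}, which gives $q(\mathsf{y}) = \max_{\omega \in \{\Omega \mid \mathsf{Y}(\omega) = \mathsf{y}\}} q(\omega)$. Since $\mathsf{Y} = f(\mathsf{X})$, the condition $\mathsf{Y}(\omega) = \mathsf{y}$ is equivalent to $\mathsf{X}(\omega) \in f^{-1}(\mathsf{y})$, which lets me partition the preimage as
\begin{equation*}
\{\omega \in \Omega \mid \mathsf{Y}(\omega) = \mathsf{y}\} \;=\; \bigcup_{\mathsf{x} \in \{\mathscr{X} \mid f(\mathsf{x}) = \mathsf{y}\}} \{\omega \in \Omega \mid \mathsf{X}(\omega) = \mathsf{x}\}.
\end{equation*}
The hypothesis $f^{-1}(\mathsf{y}) \neq \emptyset$ ensures this union is non-empty. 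Splitting the outer maximum along this partition and again invoking Definition \ref{cdist_definition} for $q(\mathsf{x})$ yields \eqref{eq_cdist_1}.

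For \eqref{eq_cdist_2}, I would group the maximum on the left by the value of $\mathsf{y} = f(\mathsf{x})$:
\begin{equation*}
\max_{\mathsf{x} \in \mathscr{X}} \bigl( g(f(\mathsf{x})) + q(\mathsf{x}) \bigr) \;=\; \max_{\mathsf{y} \in \mathscr{Y}} \, \max_{\mathsf{x} \in \{\mathscr{X} \mid f(\mathsf{x}) = \mathsf{y}\}} \bigl( g(\mathsf{y}) + q(\mathsf{x}) \bigr).
\end{equation*}
Since $g(\mathsf{y})$ is constant in the inner maximization, it can be pulled out, and then applying \eqref{eq_cdist_1} to the remaining $\max_{\mathsf{x}:f(\mathsf{x})=\mathsf{y}} q(\mathsf{x}) = q(\mathsf{y})$ delivers the right-hand side. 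The one minor subtlety to flag is the ``empty max equals $-\infty$'' convention already built into the definition of $q$: realizations $\mathsf{y} \in \mathscr{Y}$ with $q(\mathsf{y}) = -\infty$ contribute trivially on both sides, which is why the surjectivity assumption $f^{-1}(\mathsf{y}) \neq \emptyset$ is enough and no nondegeneracy on $q$ itself is needed.

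I do not anticipate a true obstacle here; the only care point is keeping the conventions about $-\infty$ and empty maxima consistent so that the rearrangement of iterated maxima is unconditionally valid, and making sure that \eqref{eq_cdist_1} is established \emph{before} it is invoked in the proof of \eqref{eq_cdist_2}.
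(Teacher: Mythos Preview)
Your proposal is correct and mirrors the paper's proof essentially step for step: both establish \eqref{eq_cdist_1} by partitioning $\{\omega:\mathsf{Y}(\omega)=\mathsf{y}\}$ into the fibers $\{\omega:\mathsf{X}(\omega)=\mathsf{x}\}$ over $f^{-1}(\mathsf{y})$ and applying Definition~\ref{cdist_definition} twice, then derive \eqref{eq_cdist_2} by grouping the max over $\mathscr{X}$ by the value of $f(\mathsf{x})$ and invoking \eqref{eq_cdist_1}. The only cosmetic difference is that the paper starts from the right-hand side of \eqref{eq_cdist_2} and works left, whereas you go left to right.
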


\begin{proof}
Using Definition \ref{cdist_definition}, $q(\mathsf{y}) = \max_{\omega \in \{\Omega|\mathsf{Y}(\omega) = \mathsf{y}\}}$ $q(\omega)$, where $\{\Omega~|~\mathsf{Y}(\omega) = \mathsf{y}\} = \cup_{\mathsf{x} \in \{\mathscr{X}|f(\mathsf{x}) = \mathsf{y}\}} \{\Omega~|~ \mathsf{X}(\omega)  = \mathsf{x}\}$. This implies that $q(\mathsf{y}) = \max_{\mathsf{x} \in \{\mathscr{X}|f(\mathsf{x}) = \mathsf{y}\}} \max_{\omega \in \{\Omega|\mathsf{X}(\omega) = \mathsf{x}\}} $ $q(\omega) = \max_{\mathsf{x} \in \{\mathscr{X}|f(\mathsf{x}) = \mathsf{y}\}} q(\mathsf{x})$, where, in the second equality, we  used Definition \ref{cdist_definition}. This proves \eqref{eq_cdist_1}. Next, we use \eqref{eq_cdist_1} in the RHS of \eqref{eq_cdist_2} as $\max_{\mathsf{y} \in \mathscr{Y}} ( g(\mathsf{y}) + q(\mathsf{y}) )= \max_{\mathsf{y} \in \mathscr{Y}}( g(\mathsf{y}) + \max_{\mathsf{x} \in \{\mathscr{X}|f(\mathsf{x}) = \mathsf{y}\}}$ $q(\mathsf{x}) ) = \max_{\mathsf{y} \in \mathscr{Y}}\max_{\mathsf{x} \in \{\mathscr{X}|f(\mathsf{x}) = \mathsf{y}\}}( g(f(\mathsf{x}))+ q(\mathsf{x}) ) = \max_{\mathsf{x} \in \mathscr{X}}$  $( g(f(\mathsf{x})) + q(\mathsf{x}) )$, which completes the proof for \eqref{eq_cdist_2}.
\end{proof}

%\begin{remark}
%Cost distributions satisfy all the properties of a \textit{measure} in the $(\max, +)$ algebra, where (1) the additive operator is $\max$ with the additive identity given by $-\infty$, and (2) the multiplicative operator is $+$ with the multiplicative identity given by $0$. They were originally defined in the context of $(\max, +)$ algebra in [cite the work of Maslov] and using in minimax control problems in [cite the work of Bernhard].
%\end{remark}

\subsection{Specialized Dynamic Program} \label{subsection:specialized_DP}

In this subsection, we construct a specialized DP decomposition for Problem \ref{problem_1} using two specific cost distributions, the first of which is an indicator function.

\begin{definition} \label{def_ind}
Let $\mathsf{X} \in \mathscr{X}$ and $\mathsf{Y} \in \mathscr{Y}$ be two finite uncertain variables. The \textit{indicator function} for any $\mathsf{x} \in \mathscr{X}$ is given by
\begin{gather}
    \mathbb{I}(\mathsf{x}) :=
    \begin{aligned}
    \begin{cases}
        0, &\text{ if } \mathsf{x} \in [[\mathsf{X}]], \\
        - \infty, &\text{ if } \mathsf{x} \not\in [[\mathsf{X}]],
    \end{cases}
    \end{aligned}
\end{gather}
and the conditional indicator function for any $\mathsf{x} \in \mathscr{X}$ given a realization $\mathsf{y} \in \mathscr{Y}$ with $\mathbb{I}(\mathsf{y}) > - \infty$ is
\begin{gather} \label{indicator_def}
    \mathbb{I}(\mathsf{x}|\mathsf{y}) :=
    \begin{aligned}
    \begin{cases}
        0, &\text{ if } \mathsf{x} \in [[\mathsf{X}|\mathsf{y}]], \\
        - \infty, &\text{ if } \mathsf{x} \not\in [[\mathsf{X}|\mathsf{y}]].
    \end{cases}
    \end{aligned}
\end{gather}
\end{definition}

The indicator function $\mathbb{I}$ can be shown to satisfy the conditions in Definition \ref{cdist_definition} and thus, it constitutes a valid cost distribution. In addition to Lemma \ref{lem_cdist_property}, for two uncertain variables $\mathsf{X} \in \mathscr{X}$ and $\mathsf{Y} \in \mathscr{Y}$ and any function $f:\mathscr{X} \to \mathbb{R}$, %it also satisfies the following useful property:
\begin{gather} \label{max_indicator}
    \max_{\mathsf{x} \in [[\mathsf{X}|\mathsf{y}]]} f(\mathsf{x}) = \max_{\mathsf{x} \in \mathscr{X}} \big(f(\mathsf{x}) + \mathbb{I}(\mathsf{x}|\mathsf{y}) \big), \quad \forall \mathsf{y} \in \mathscr{Y}.
\end{gather}
We also require the \textit{accrued distribution} for an uncertain variable at each $t$, defined using the accrued cost $A_t \in \mathcal{A}_t$.

\begin{definition} \label{def_accrued_cost}
Let $\mathsf{X} \in \mathscr{X}$ and $\mathsf{Y} \in \mathscr{Y}$ be two finite uncertain variables and let $A_t \in \mathcal{A}_t$ be the accrued cost at any $t=0,\dots,T$. An \textit{accrued distribution} at any $t$ for any $\mathsf{x} \in \mathscr{X}$ is a function $r_t: \mathscr{X} \to \{-\infty\} \cup [-a_t^{\max}, 0]$, given by
\begin{gather}
    r_t(\mathsf{x}) := \max_{a_t \in \mathcal{A}_t}\big(a_t + \mathbb{I}(\mathsf{x}, a_t)\big) - \max_{a_t \in \mathcal{A}_t} \big(a_t + \mathbb{I}(a_t) \big), 
\end{gather}
and for $\mathsf{x} \in \mathscr{X}$ given a realization $\mathsf{y} \in \mathscr{Y}$, $\mathbb{I}(\mathsf{y} ) > - \infty$, it is a function $r_t: \mathscr{X} \times \mathscr{Y} \to \{-\infty\} \cup [-a_t^{\max}, 0]$, given by
\begin{align}
    \hspace{-5pt} r_t(\mathsf{x}|\mathsf{y}) \hspace{-2pt} := \hspace{-2pt} \max_{a_t \in \mathcal{A}_t} \hspace{-2pt} \big(a_t + \mathbb{I}(\mathsf{x}, a_t|\mathsf{y}) \big) \hspace{-2pt}
    - \hspace{-2pt} \max_{a_t \in \mathcal{A}_t} \hspace{-2pt} \big(a_t + \mathbb{I}(a_t|\mathsf{y}) \big), \hspace{-4pt} \label{def_r}
\end{align}
where $a_t^{\max} := \max \mathcal{A}_t$.
\end{definition}

At each $t=0,\dots,T$, note that the accrued distribution $r_t(\mathsf{x}|\mathsf{y}) = -\infty$ if $\mathsf{x} \not\in [[\mathsf{X}|\mathsf{y}]]$ whereas $r_t(\mathsf{x}|\mathsf{y}) \in [-a_t^{\max},0]$ if $\mathsf{x} \in [[\mathsf{X}|\mathsf{y}]]$. It satisfies the properties to be a valid cost distribution. %Subsequently, it also satisfies Lemma \ref{lem_cdist_property}. 
Furthermore, we can compute the conditional range $[[X_t, M_{t+1}|m_t, u_t]]$ at any $t$ given the realizations $m_t \in \mathcal{M}_t$ and $u_t \in \mathcal{U}$. Subsequently, we can use Definitions \ref{def_ind} - \ref{def_accrued_cost} to derive the accrued distribution $r_t(x_t, m_{t+1}|m_t, u_t)$, for all $x_t \in \mathcal{X}$ and $m_{t+1} \in \mathcal{M}_{t+1}$. Then,
we use it in the specialized DP decomposition for Problem \ref{problem_1} as follows. For all $m_t \in \mathcal{M}_t$ and $u_t \in \mathcal{U}$, for all $t=0,\dots,T-1$, we define
\begin{align}
    Q_t(m_t,u_t) \hspace{-2pt} := \hspace{-2pt}  &\max_{x_t \in \mathcal{X}, m_{t+1} \in \mathcal{M}_{t+1}} \big( c_t(x_t,u_t) + V_{t+1}(m_{t+1}) \nonumber \\
    &\quad \quad \quad \quad \quad \quad + r_t(x_t, m_{t+1}|m_t, u_t) \big), \label{DP_ad_1} \\
    V_t(m_t) \hspace{-2pt} := &\min_{u_t \in \mathcal{U}} Q_t(m_t, u_t), \label{DP_ad_2}
\end{align}
where, at time $T$, $Q_T(m_T,u_T) := \max_{x_T \in \mathcal{X}}$ $\big( c_T(x_T,u_T) + r_T(x_T|m_T)\big)$ and $V_T(m_T) := \min_{u_T \in \mathcal{U}} Q_T(m_T, u_T)$. We define the corresponding control law at time $t$ as $g_t^*(m_t) := \arg\min_{u_t \in \mathcal{U}} Q_t(m_t,u_t)$ and the control strategy as $\boldsymbol{g}^*=(g_0^*, \dots, g_T^*)$. Next, we show that solving the DP \eqref{DP_ad_1} - \eqref{DP_ad_2} computes the optimal performance and control strategy.

\begin{theorem} \label{thm_equivalence}
For all $m_t \in \mathcal{M}_t$ and $u_t \in \mathcal{U}$, for all $t=0,\dots,T$,
\begin{align}
    Q_t^{\text{tm}}(m_t, u_t) &= Q_t(m_t, u_t) + \max_{a_t \in [[A_t|m_t]]} a_t, \label{eq_thm_ad_m_1} \\
    V_t^{\text{tm}}(m_t) &= V_t(m_t) + \max_{a_t \in [[A_t|m_t]]} a_t, \label{eq_thm_ad_m_2}
\end{align}
and furthermore, $\boldsymbol{g}^*$ is an optimal solution to Problem \ref{problem_1}.
\end{theorem}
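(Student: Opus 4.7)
I would establish \eqref{eq_thm_ad_m_1}--\eqref{eq_thm_ad_m_2} by backward induction on $t$, and then read off the optimality of $\boldsymbol{g}^*$ from the fact that the additive offset $\max_{a_t \in [[A_t|m_t]]} a_t$ does not depend on $u_t$. The main mechanism driving the induction is the one-step recursion $A_{t+1} = A_t + c_t(X_t, U_t)$, which lets the ``worst accrued cost at time $t+1$'' be peeled into one current stage cost plus an accrued cost at time $t$ that is absorbed into the definition of $r_t$.

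For the base case $t=T$, I would first invoke \eqref{max_indicator} to rewrite $Q_T^{\text{tm}}$ as a joint maximum of $c_T(x_T, u_T) + a_T + \mathbb{I}(x_T, a_T|m_T, u_T)$ over $(x_T, a_T) \in \mathcal{X} \times \mathcal{A}_T$. Because $U_T$ affects no earlier variable, $[[A_T|m_T, u_T]] = [[A_T|m_T]]$ and $[[X_T|m_T, u_T]] = [[X_T|m_T]]$, so pulling the maximum over $a_T$ inside and applying Definition \ref{def_accrued_cost} decomposes the inner maximum as $r_T(x_T|m_T) + \max_{a_T \in [[A_T|m_T]]} a_T$. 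This yields \eqref{eq_thm_ad_m_1} at $t=T$, and \eqref{eq_thm_ad_m_2} follows immediately by minimizing over $u_T$ since the offset is $u_T$-independent.

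For the inductive step, assuming the claim at $t+1$ and substituting into \eqref{DP_ad_basic_1} gives
\begin{align*}
Q_t^{\text{tm}}(m_t, u_t) = \max_{m_{t+1} \in [[M_{t+1}|m_t, u_t]]} \Big( V_{t+1}(m_{t+1}) + \max_{a_{t+1} \in [[A_{t+1}|m_{t+1}]]} a_{t+1} \Big).
\end{align*}
I would then rewrite the inner maximum using $a_{t+1} = a_t + c_t(x_t, u_t)$ as $\max_{(a_t, x_t) \in [[A_t, X_t | m_{t+1}]]} (a_t + c_t(x_t, u_t))$ (noting that $u_t$ is embedded in $m_{t+1}$), chain the two conditional ranges into a single maximization over $(x_t, m_{t+1}, a_t) \in [[X_t, M_{t+1}, A_t | m_t, u_t]]$, and convert the range constraint into an additive $\mathbb{I}(x_t, m_{t+1}, a_t | m_t, u_t)$ term via \eqref{max_indicator}. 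Invoking Definition \ref{def_accrued_cost} on the $a_t$-dependent piece replaces $\max_{a_t}(a_t + \mathbb{I}(x_t, m_{t+1}, a_t | m_t, u_t))$ by $r_t(x_t, m_{t+1}|m_t, u_t) + \max_{a_t \in [[A_t|m_t]]} a_t$, and collecting terms matches exactly $Q_t(m_t, u_t) + \max_{a_t \in [[A_t|m_t]]} a_t$. Minimizing over $u_t$ then gives \eqref{eq_thm_ad_m_2}, and since the offset is $u_t$-independent, the minimizers of $Q_t^{\text{tm}}(m_t, \cdot)$ and $Q_t(m_t, \cdot)$ coincide; hence $\boldsymbol{g}^*$ inherits optimality for Problem \ref{problem_1} from the terminal-cost strategy $\boldsymbol{g}^{\text{tm}}$.

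The main obstacle will be the bookkeeping in the inductive step: correctly chaining $[[A_{t+1}|m_{t+1}]]$ and $[[M_{t+1}|m_t, u_t]]$ into the single joint range $[[X_t, M_{t+1}, A_t|m_t, u_t]]$, and justifying $[[A_t|m_t, u_t]] = [[A_t|m_t]]$, both of which rest on the Markovian structure of the augmented state ensured by Assumption \ref{assumption_primitive}, together with careful tracking of which history components $U_t$ is and is not allowed to influence.
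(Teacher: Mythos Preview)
Your proposal is correct and follows essentially the same route as the paper's proof: backward induction on $t$, with the base case and inductive step each converting the terminal-DP value via \eqref{max_indicator} into an indicator-form maximum, then adding and subtracting the normalizing term from Definition~\ref{def_accrued_cost} to expose $r_t$ plus the accrued-cost offset $\max_{a_t \in [[A_t|m_t]]} a_t$. The paper likewise justifies the chaining of conditional ranges by the fact that $m_{t+1} = (m_t, u_t, y_{t+1})$, and obtains the optimality of $\boldsymbol{g}^*$ from the $u_t$-independence of the offset, exactly as you outline.
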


\begin{proof}
See Appendix A.
\end{proof}

Thoerem \ref{thm_equivalence} establishes that the specialized DP \eqref{DP_ad_1} - \eqref{DP_ad_2} computes an optimal solution to Problem \ref{problem_1}. Note that at each $t$, the optimization in the RHS of \eqref{DP_ad_2} must still be solved for each possible $m_t \in \mathcal{M}_t$, in a manner similar to \eqref{DP_ad_basic_1} - \eqref{DP_ad_basic_2} Thus, we still require a large number of computations for longer time horizons. In the next subsection, we define \textit{information states} to address this concern. %Here, the specialized DP has an advantage because it yields a more efficient notion of an information state for Problem \ref{problem_1} as compared to the terminal cost DP. We explicitly show this improvement in Remark \ref{remark:improvement}.

\subsection{Information States} \label{subsection:info_state}

In this subsection, we introduce information states to construct an optimal DP decomposition for Problem \ref{problem_1}.

\begin{definition} \label{def_info_ad_state}
An \textit{information state} at any $t=0,\dots,T$ is an uncertain variable $\Pi_t= \sigma_t(M_t)$ taking values in a finite set $\mathcal{P}_t$, where $\sigma_t: \mathcal{M}_t \to \mathcal{P}_t$. Furthermore, for all $t$, for all $m_t \in \mathcal{M}_t$, $u_t \in \mathcal{U}$, $x_t \in \mathcal{X}$ and $\pi_{t+1} \in \mathcal{P}_{t+1}$, it satisfies:
\begin{align}
    r_t(x_t, \pi_{t+1}|m_t, u_t) &= r_t(x_t, \pi_{t+1}|\sigma_t(m_t), u_t), \nonumber \\ & \quad \quad \quad \quad \quad \quad \quad t =0,\dots, T-1, \label{p_ad_t}\\
    r_T(x_T|m_T) &= r_T(x_T|\sigma_t(m_T)). \label{p_ad_final}
\end{align}
\end{definition}

In the corresponding DP, for all $\pi_t \in \mathcal{P}_t$ and $u_t \in \mathcal{U}$, for all $t=0,\dots,T-1$, we define the value functions
\begin{align}
    \bar{Q}_t(\pi_t, u_t) := &\max_{x_t \in \mathcal{X}, \pi_{t+1} \in \mathcal{P}_{t+1}} \big ( \bar{V}_{t+1}(\pi_{t+1}) +c_t(x_t,u_t) \nonumber \\
    &\quad \quad \quad \quad \quad \quad  + r_t(x_t, \pi_{t+1}|\pi_t, u_t) \big ), \label{DP_info_ad_1}\\
    \bar{V}_t(\pi_t) := &\min_{u_t \in \mathcal{U}} \bar{Q}_t(\pi_t, u_t), \label{DP_info_ad_2}
\end{align}
where, at time $T$, $\bar{Q}_T(\pi_T, u_T) := \max_{x_T \in \mathcal{X}} \big(c_T(x_T,u_T) + r_T(x_T|\pi_T)\big)$ and $\bar{V}_T(\pi_T) := \min_{u_T \in \mathcal{U}} \bar{Q}_T(\pi_T, u_T)$. The control law at each $t$ is $\bar{g}_t^*(\pi_t) := \arg \min_{u_t \in \mathcal{U}} \bar{Q}_t(\pi_t, u_t)$. Next, we prove that the information state based DP \eqref{DP_info_ad_1} - \eqref{DP_info_ad_2} yields the same value as the specialized DP \eqref{DP_ad_1} - \eqref{DP_ad_2}.

\begin{theorem} \label{thm_opt_ad_dp}
Let $\Pi_t = \sigma_t(M_t)$ be an information state at each $t=0,\dots,T$. Then, for all $m_t \in \mathcal{M}_t$ and $u_t \in \mathcal{U}$, ${Q}_t(m_t, u_t) = \bar{Q}_t(\sigma_t(m_t), u_t)$ and $V_t(m_t) = \bar{V}_t(\sigma_t(m_t))$.
\end{theorem}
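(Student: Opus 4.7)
The plan is to prove both equalities simultaneously by backward induction on $t$, starting from $t = T$. For the base case, the information state property \eqref{p_ad_final} gives $r_T(x_T \mid m_T) = r_T(x_T \mid \sigma_T(m_T))$ for every $x_T \in \mathcal{X}$. Substituting directly into the definition of $Q_T(m_T, u_T)$ yields $\bar{Q}_T(\sigma_T(m_T), u_T)$ term-for-term, and minimizing over $u_T$ carries the equality through to $V_T$.

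For the inductive step, assume $V_{t+1}(m_{t+1}) = \bar{V}_{t+1}(\sigma_{t+1}(m_{t+1}))$ for every $m_{t+1} \in \mathcal{M}_{t+1}$. Substituting this hypothesis into \eqref{DP_ad_1} rewrites $Q_t(m_t, u_t)$ as a maximum over $(x_t, m_{t+1})$ of $c_t(x_t, u_t) + \bar{V}_{t+1}(\sigma_{t+1}(m_{t+1})) + r_t(x_t, m_{t+1} \mid m_t, u_t)$. The key move is to apply \eqref{eq_cdist_2} of Lemma \ref{lem_cdist_property} with $\mathsf{X} = (X_t, M_{t+1})$, the deterministic map $f(x_t, m_{t+1}) = (x_t, \sigma_{t+1}(m_{t+1}))$ producing $\mathsf{Y} = (X_t, \Pi_{t+1})$, the conditional cost distribution $r_t(\cdot, \cdot \mid m_t, u_t)$ playing the role of $q$, and the test function $(x_t, \pi_{t+1}) \mapsto c_t(x_t, u_t) + \bar{V}_{t+1}(\pi_{t+1})$ playing the role of $g$. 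This collapses the inner maximum over $m_{t+1}$ into a maximum over $\pi_{t+1}$, where the induced conditional cost distribution is exactly $r_t(x_t, \pi_{t+1} \mid m_t, u_t)$ by \eqref{eq_cdist_1}. Finally, the information state property \eqref{p_ad_t} replaces this by $r_t(x_t, \pi_{t+1} \mid \sigma_t(m_t), u_t)$, matching the definition of $\bar{Q}_t(\sigma_t(m_t), u_t)$ in \eqref{DP_info_ad_1}. Taking the minimum over $u_t$ on both sides then closes the induction.

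The only delicate point is the invocation of Lemma \ref{lem_cdist_property}: one must verify that $r_t(\cdot, \cdot \mid m_t, u_t)$ is a valid cost distribution over $(x_t, m_{t+1})$, which it is by Definition \ref{def_accrued_cost} since it is normalized to have maximum zero on its support, and that the cost distribution on $(X_t, \Pi_{t+1})$ induced by marginalizing along $\sigma_{t+1}$ agrees with the quantity $r_t(x_t, \pi_{t+1} \mid m_t, u_t)$ appearing in \eqref{p_ad_t}. This agreement follows from \eqref{eq_cdist_1} together with Definition \ref{def_accrued_cost}, since both expressions reduce to the same pointwise maximum over $m_{t+1}$ in the fiber above $\pi_{t+1}$, normalized by the same denominator. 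No other step of the induction requires a genuinely new idea.
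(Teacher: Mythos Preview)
Your proposal is correct and follows essentially the same approach as the paper: backward induction from $t=T$, with the base case handled by \eqref{p_ad_final} and the inductive step by substituting the hypothesis into \eqref{DP_ad_1}, invoking \eqref{eq_cdist_2} of Lemma~\ref{lem_cdist_property} to pass from a maximum over $m_{t+1}$ to one over $\pi_{t+1}$, and then applying \eqref{p_ad_t}. Your discussion of the ``delicate point'' (that the induced cost distribution on $(X_t,\Pi_{t+1})$ matches $r_t(x_t,\pi_{t+1}\mid m_t,u_t)$) is more explicit than the paper's treatment, which simply cites Lemma~\ref{lem_cdist_property} and \eqref{p_ad_t} together, but the substance is identical.
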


\begin{proof}
See Appendix B.
\end{proof}

From Theorem \ref{thm_opt_ad_dp}, the strategy $\bar{\boldsymbol{g}}^* = (\bar{g}_0^*, \dots, \bar{g}_T^*)$ using information states is an optimal solution to Problem \ref{problem_1}. In practice, using information states to compute $\bar{\boldsymbol{g}}^*$ is more tractable than using the memory to compute $\boldsymbol{g}^*$ only when the set $\mathcal{P}_t$ has fewer elements than $\mathcal{M}_t$ for most instances of $t$. This is usually true for systems with long time horizons.

\subsection{Examples of Information States} \label{subsection:info_examples}

In this subsection, we present examples of information states which satisfy the conditions in Definition \ref{def_info_ad_state}.

%In this subsection, we consider a system with a known state space model to present examples of the information state for additive cost problems. Consider a system with a state space $\mathcal{X}$ at each $t=0,\dots,T$. The state of such a system is denoted by an uncertain variable $X_t \in \mathcal{X}$. The state evolves as $X_{t+1} = f_t(X_t, U_t, W_t)$ at each $t=0,\dots,T$. Furthermore, consider that at each $t$, the agent's observation is  $Y_t = h_t(X_t, N_t)$, $N_t \in \mathcal{N}_t$, be an observation of the agent and the incurred cost is given by $C_t = d_t(X_t,U_t)$.

\textit{1) Partially observed systems:} Generally, at each $t=0,\dots,T$ a valid information state which satisfies Definition \ref{def_info_ad_state} is the function valued uncertain variable $\Pi_t: \mathcal{X} \to \{-\infty\} \cup [-a_t^{\max}, 0]$. At time $t$, for a given $m_t \in \mathcal{M}_t$, the realization of $\Pi_t$ is $p_t(x_t) := r_t(x_t|m_t) = \max_{a_t \in \mathcal{A}_t}\big(a_t + \mathbb{I}(x_t,a_t|m_t)\big) - \max_{a_t \in \mathcal{A}_t}\big(a_t + \mathbb{I}(a_t|m_t)\big)$ for all $x_t \in \mathcal{X}$. 
Note that this can be interpreted as a normalization \cite{bernhard1996separation} of the standard information state from \cite{james1994risk,  bernhard2003minimax}.

\textit{2) Perfectly observed systems:} Consider a system where $Y_t = X_t$ for all $t$. An information state for such a system is $\Pi_t = X_t$ at each $t$, i.e, the state itself. This information state is simpler than the one in Case 1.

%\textit{3) Systems with observed costs:} Consider a partially observed system where the agent observes the most recently incurred cost at each time step. Thus, at $t=0$ the agent receives an observation $Y_t$ and at each $t=1,\dots,T$, the agent receives the tuple $(Y_t, C_{t-1})$ containing both an observation and the incurred cost $C_{t-1} = c_{t-1}(X_{t-1}, U_{t-1})$. For such a system, an information state which satisfies \eqref{p_ad_t} - \eqref{p_ad_final} is $\Pi_t = [[X_t|M_t]]$ at each $t$, i.e., the conditional range of the state given the memory. This is simpler than the information state for a general partially observed system. We can prove this result by substituting $M_t = (Y_{0:t}, C_{0:t-1}, U_{0:t-1})$ into the information state in Case 1. %, as detailed in Appendix D.

\textit{3) Systems with action dependent costs:} Consider a partially observed system where at each $t$ the cost has the form $c_t(U_t) \in \mathbb{R}_{\geq0}$, and the terminal cost is $c_T(X_T, U_T)$. Then, an information state is the conditional range $\Pi_t = [[X_t|M_t]]$ at each $t$ (see Appendix C of our online preprint \cite{dave2022additive}). %This is simpler than the one presented in Case 1.

\begin{remark} \label{remark:improvement}
From \cite{Dave2022approx}, we know that the terminal DP \eqref{DP_ad_basic_1} - \eqref{DP_ad_basic_2} can be used to derive another information state $\Xi_t = [[X_t, A_t| M_t]]$ for each $t$ for Case 1. The conditional range $\Xi_t$ can take $2^{|\mathcal{A}_t|\times|\mathcal{X}|}$ feasible values whereas $\Pi_t$ from Case 1 can take $|\mathcal{A}_t|^{|\mathcal{X}|}$ values. As $|\mathcal{A}_t|$ grows in size with time $t$, the number of feasible values of $\Pi_t$ increases at a slower rate than the number of feasible values of $\Xi_t$. Thus, $\Pi_t$ yields a more computationally tractable DP than $\Xi_t$. This illustrates that constructing information states using the specialized DP \eqref{DP_ad_1} - \eqref{DP_ad_2} is better than using the terminal DP \eqref{DP_ad_basic_1} - \eqref{DP_ad_basic_2}. 
\end{remark}

%\begin{remark}
%As noted before, the information state presented in Case 1 is a normalized version of a well known information state for additive cost problems with a given state space model and partial observations. However, our information state is an improvement on the standard one because it directly yields the results for Cases 2-4 by substituting the appropriate properties as proved in Appendix D. This does not hold for the standard information state. Thus, our information state is a more flexible and robust construction.
%\end{remark}

\begin{remark}
Using Definition \ref{def_info_ad_state} we can identify simpler information states for systems with special properties, as shown in Cases 2 - 3. However, in many applications, merely using an information state may not sufficiently improve the tractbility optimal strategies. %estimation of an information state from limited data or required 
Thus, we extend Definition \ref{def_info_ad_state} to include approximate information states in Section \ref{section:approx}.
\end{remark}

\section{Approximate Information States} \label{section:approx}

In this section, we define approximate information states and utilize them to develop an approximate DP. We begin by defining a distance between two cost distributions.

\begin{definition} \label{def_cost_metric}
Let $\mathscr{X}$ be a finite subset of a metric space $(\mathscr{S}, d)$, with an uncertain variable $X \in \mathscr{X}$ and two distributions $r: \mathscr{X} \to \{-\infty\} $ $\cup[-a^1, 0]$ and $q: \mathscr{X} \to \{-\infty\} \cup [-a^2, 0]$, $a^1, a^2\in \mathbb{R}_{\geq0}$. Then:

1) The \textit{finite domains} of $r$ and $q$ are the sets $\mathscr{X}^r \hspace{-2pt} := \hspace{-2pt} \{\mathsf{x} \hspace{-1pt} \in \hspace{-1pt} \mathscr{X}$ $|r(\mathsf{x}) \hspace{-1pt} \neq \hspace{-1pt} -\infty\}$ and $\mathscr{X}^q \hspace{-2pt} := \hspace{-2pt} \{\mathsf{x} \hspace{-1pt} \in \hspace{-1pt} \mathscr{X}| q(\mathsf{x}) \hspace{-1pt} \neq \hspace{-1pt} -\infty\},$ respectively.

2) For any $\mathsf{x} \in \mathcal{X}^r \cup \mathcal{X}^q$, the \textit{nearest finite inputs} for $r$ and $q$ are given by $\psi^r(\mathsf{x}) := \arg\min_{\hat{\mathsf{x}} \in \mathscr{X}^r} d(\hat{\mathsf{x}},\mathsf{x}),$ and $\psi^q(\mathsf{x}) := \arg\min_{\hat{\mathsf{x}} \in \mathscr{X}^q}$ $d(\hat{\mathsf{x}},\mathsf{x})$, respectively.

3) The \textit{distance} between the distributions $r$ and $q$ is
\begin{multline} \label{eq_cost_metric}
    \mathcal{R}\big(r, q\big) := \max \big( \mathcal{H}(\mathscr{X}^r, \mathscr{X}^q), \\
    \max_{\mathsf{x} \in \mathscr{X}^r \cup \mathscr{X}^q} |{r}(\psi^r(\mathsf{x})) - {q}(\psi^q(\mathsf{x}))| \big),
\end{multline}
where $\mathcal{H}$ is the Hausdorff metric.
\end{definition}

%\begin{remark}
%The distance measure $\mathcal{R}$ is a pseudo-metric because (1) it always takes non-negative values, (2) it is always finite valued, (3) it satisfies the triangle inequality, and (4) it returns a distance of $0$ when the two functions $r$ and $q$ are equal to each other.
%\end{remark}
\begin{remark}
Because any cost distribution cannot identically return $- \infty$ for all $\mathsf{x} \in \mathscr{X}$, the sets $\mathscr{X}^r$ and $\mathscr{X}^q$ are non-empty for all distributions $r, q$ on $\mathsf{X}$. Consequently, the distance $\mathcal{R}(r,q)$ always returns a finite value.
\end{remark}

Note that $\mathcal{R}$ is the maximum of a metric on a set-space and a metric on a function-space. Thus, it can quantify the distance between two different \textit{accrued distributions} on an uncertain variable $\mathsf{X} \in \mathscr{X}$. Specifically, let $\mathsf{Y} \in \mathscr{Y}$ and $\mathsf{Z} \in \mathscr{Z}$ take realizations $y \in \mathcal{Y}$ and $z \in \mathcal{Z}$, respectively, such that $[[\mathsf{X}, A_t|\mathsf{y}]] \neq \emptyset$ and $[[\mathsf{X}, A_t|\mathsf{z}]] \neq \emptyset$ for some time $t$. Then, we denote the functional forms of the conditional distributions on $\mathsf{X}$ given $y$ and given $z$ as $r_t(\mathsf{X}|\mathsf{y})$ and $r_t(\mathsf{X}|\mathsf{z})$, respectively, %Furthermore, let $[[\mathsf{X}, A_t|\mathsf{y}]] \neq \emptyset$ and $[[\mathsf{X}, A_t|\mathsf{z}]] \neq \emptyset$, i.e., there is at least one feasible realization of $\mathsf{X}$ and $A_t$ given $\mathsf{y}$ and given $\mathsf{z}$, respectively.
%The presence of $-\infty$ in the co-domain of accrued distributions from Definition \ref{def_accrued_cost} necessitates a careful definition of the distance $\mathcal{R}$ which always returns a finite distance for any two accrued distributions on the same uncertain variable. Consider two uncertain variables $X \in \mathcal{X}$ and $Y \in \mathcal{Y}$ with an accrued distribution $r_t$ at any $t=0,\dots,T$. For each $y \in \mathcal{Y}$, we denote the functional form of the accrued distribution on the uncertain variable $X$ given the realization $y$ for $Y$ by $r_t(X|y)$ instead of $r_t(\cdot|y)$ to for clarity. Let $y^1, y^2 \in \mathcal{Y}$ be two realizations of $Y$ such that $[[X, A_t|y^1]] \neq \emptyset$ and $[[X, A_t| y^2]] \neq \emptyset$. 
and quantify the distance between them as
\begin{multline} \label{eq_distribution_distance}
    \mathcal{R}\big(r_t(\mathsf{X}|\mathsf{y}), r_t(\mathsf{X}|\mathsf{z})\big) := \max\big( \mathcal{H}\big([[\mathsf{X}|\mathsf{y}]], [[\mathsf{X}|\mathsf{z}]]\big), \\
    \max_{\mathsf{x} \in [[\mathsf{X}|\mathsf{y}]] \cup [[\mathsf{X}|\mathsf{z}]]} \big|r_t\big(\psi^{\mathsf{y}}(\mathsf{x})|\mathsf{y}\big) - r_t\big(\psi^{\mathsf{z}}(\mathsf{x})|\mathsf{z}\big) \big| \big),
\end{multline}
where, the finite domains are $\{\mathsf{x} \in \mathscr{X}~|~r_t(\mathsf{x}|\mathsf{y}) \neq -\infty\} = [[\mathsf{X}|\mathsf{y}]]$ and $\{\mathsf{x} \in \mathscr{X}~|~r_t(\mathsf{x}|\mathsf{z}) \neq -\infty\} = [[\mathsf{X}|\mathsf{z}]]$; and for any $\mathsf{x} \in [[\mathsf{X}|\mathsf{y}]] \cup [[\mathsf{X}|\mathsf{z}]]$, the nearest finite inputs are $\psi^{\mathsf{y}}(\mathsf{x}) :=  \arg\min_{\hat{\mathsf{x}} \in [[\mathsf{X}|\mathsf{y}]]} d(\hat{\mathsf{x}},\mathsf{x})$ and $\psi^{\mathsf{z}}(\mathsf{x}) :=  \arg\min_{\hat{\mathsf{x}} \in [[\mathsf{X}|\mathsf{z}]]} d(\hat{\mathsf{x}}, \mathsf{x})$. %In the RHS of \eqref{eq_distribution_distance}, note that by construction of the accrued distributions, %we have substituted $\mathcal{X}^{y} = [[X|y]]$ and $\mathcal{X}^{z} = [[X|z]]$. This substitution holds because for $[[X, A_t|y]] \neq \emptyset$ note that
%$\{\mathsf{x} \in \mathscr{X}~|~r_t(\mathsf{x}|\mathsf{y}) \neq -\infty\} = [[\mathsf{X}|\mathsf{y}]]$ at each $t$ when $[[\mathsf{X}, A_t|\mathsf{y}]] \neq \emptyset$.
Next, using $\mathcal{R}$ to quantify the approximation gap, we define approximate information states for Problem \ref{problem_1}.

%\begin{remark} \label{remark_r_property}
%For any $t=0,\dots,T$, consider that $[[X,A_t|y]] \neq \emptyset$. Then, from Definition \ref{def_accrued_cost} we conclude that $r_t(x|y) = -\infty$ if and only if $x \not \in [[X|y]]$. In practice, this property will always hold when we use accrued distributions and will be useful in Section \ref{section:approx} when we seek to measure the distance between two accrued distributions.
%\end{remark}

\begin{definition} \label{def_ad_approx}
An \textit{approximate information state} at any $t=0,\dots,T$ is an uncertain variable $\hat{\Pi}_t = \hat{\sigma}_t(M_t)$ taking values in a finite subset $\hat{\mathcal{P}}_t$ of some metric space, where $\hat{\sigma}_t : \mathcal{M}_t \to \hat{\mathcal{P}}_t$. Furthermore, for all $t$, there exists a parameter $\epsilon_t \in \mathbb{R}_{\geq0}$ such that for all $m_t \in \mathcal{M}_t$ and $u_t \in \mathcal{U}$, it satisfies:
\begin{align}
    &\mathcal{R}\big(r_t(X_t, \hat{\Pi}_{t+1}~|~m_t,u_t), r_t(X_t, \hat{\Pi}_{t+1}~|~\hat{\sigma}_t(m_t),u_t)\big) \nonumber \\
    &\quad \quad \quad \quad \quad \quad \quad \quad \quad \leq \epsilon_t, \quad t =0,\dots,T-1, \label{ap_ad_1}\\
    &\mathcal{R}\big(r_T(X_T~|~m_T), r_T(X_T~|~\hat{\sigma}_T(m_T))\big) \leq \epsilon_T. \label{ap_ad_2}
\end{align}
\end{definition}

In the approximate DP, for all $t=0,\dots,T-1$, for all $\hat{\pi}_t \in \hat{\mathcal{P}}_t$ and $u_t \in \mathcal{U}$, we recursively define the value functions
\begin{align}
    \hat{Q}_t(\hat{\pi}_t, u_t) := &\max_{x_t \in \mathcal{X}, \hat{\pi}_{t+1} \in \hat{\mathcal{P}}_{t+1}} \big ( \hat{V}_{t+1}(\hat{\pi}_{t+1}) +c_t(x_t,u_t) \nonumber \\
    & \quad \quad \quad \quad \quad \quad + r_t(x_t, \hat{\pi}_{t+1}|\hat{\pi}_t, u_t) \big ), \label{DP_approx_ad_1}\\
    \hat{V}_t(\hat{\pi}_t) := &\min_{u_t \in \mathcal{U}} \hat{Q}_t(\hat{\pi}_t, u_t), \label{DP_approx_ad_2}
\end{align}
where, at time $T$, $\hat{Q}_T(\hat{\pi}_T, u_T) := \max_{x_T \in \mathcal{X}} \big(c_T(x_T,u_T) + r_T(x_T|\hat{\pi}_T, u_T)\big)$ and $\hat{V}_T(\hat{\pi}_T) := \min_{u_T \in \mathcal{U}} \hat{Q}_T(\hat{\pi}_T, u_T)$. The control law at each $t$ is $\hat{g}_t^*(\hat{\pi}_t) := \arg \min_{u_t \in \mathcal{U}} \hat{Q}_t(\hat{\pi}_t, u_t)$ and the approximate control strategy is $\hat{\boldsymbol{g}}^* := (\hat{g}_0^*, \dots, \hat{g}_T^*)$. %In the next subsection, we show that $\hat{\boldsymbol{g}}^*$ is an approximate solution to Problem \ref{problem_1} and derive bounds on the approximation error.
%\subsection{Error Bounds for the Approximate DP} \label{subsection:ad_approx_bounds}
Next, we bound the performance loss from implementing the approximate control strategy $\hat{\boldsymbol{g}}^*$ in Problem \ref{problem_1}. We begin with a preliminary result which will be required subsequently.

\begin{lemma} \label{lem_ad_prelim}
Let $\mathscr{X}$ be a finite subset of a metric space $(\mathscr{S}, d)$ and consider two cost distributions $r: \mathscr{X} \to \{-\infty\} \cup [-a^1, 0]$ and $q: \mathscr{X} \to \{-\infty\} \cup [-a^2, 0]$, where $a^1, a^2\in \mathbb{R}_{\geq0}$. Then, for a Lipschitz function $f: \mathscr{X} \to \mathbb{R}$:
\begin{multline} \label{eq_ad_prelim}
         \big| \max_{\mathsf{x} \in \mathscr{X}} \big( f(\mathsf{x})  + r(\mathsf{x})\big) - \max_{\mathsf{x} \in \mathscr{X}}  \big(  f(\mathsf{x})  +  q(\mathsf{x})  \big)  \big| \\
        %\leq L_f \cdot \mathcal{H}(\mathcal{X}^r, \mathcal{X}^q) + \max_{x \in \mathcal{X}^r \cup \mathcal{X}^q}|r(\psi^r(x)) - q(\psi^q(x))| \\
        \leq (L_{f} + 1) \cdot \mathcal{R}\big(r, q \big).
    \end{multline}
\end{lemma}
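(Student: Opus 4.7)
The plan is to reduce the two maxima over $\mathscr{X}$ to maxima over the finite domains $\mathscr{X}^r$ and $\mathscr{X}^q$ (since any $\mathsf{x}$ outside these sets contributes $-\infty$), then compare their values by transporting a maximizer from one finite domain to the other via the nearest-finite-input map, and finally use the Lipschitz property of $f$ together with the definition of $\mathcal{R}$ to absorb both pieces of the difference.

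More concretely, I would first observe that
\begin{align*}
\max_{\mathsf{x} \in \mathscr{X}} \bigl(f(\mathsf{x}) + r(\mathsf{x})\bigr) &= \max_{\mathsf{x} \in \mathscr{X}^r} \bigl(f(\mathsf{x}) + r(\mathsf{x})\bigr),\\
\max_{\mathsf{x} \in \mathscr{X}} \bigl(f(\mathsf{x}) + q(\mathsf{x})\bigr) &= \max_{\mathsf{x} \in \mathscr{X}^q} \bigl(f(\mathsf{x}) + q(\mathsf{x})\bigr).
\end{align*}
Let $\mathsf{x}^\star$ attain the first maximum, so $\mathsf{x}^\star \in \mathscr{X}^r$, hence $\psi^r(\mathsf{x}^\star) = \mathsf{x}^\star$ because $\mathsf{x}^\star$ is its own nearest point in $\mathscr{X}^r$. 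Its image $\psi^q(\mathsf{x}^\star) \in \mathscr{X}^q$ is a valid candidate for the second maximum, so
\[
\max_{\mathsf{x} \in \mathscr{X}^q} \bigl(f(\mathsf{x}) + q(\mathsf{x})\bigr) \;\geq\; f\bigl(\psi^q(\mathsf{x}^\star)\bigr) + q\bigl(\psi^q(\mathsf{x}^\star)\bigr).
\]

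I would then split the difference into a function piece and a distribution piece:
\begin{align*}
&\max_{\mathsf{x} \in \mathscr{X}} \bigl(f(\mathsf{x}) + r(\mathsf{x})\bigr) - \max_{\mathsf{x} \in \mathscr{X}} \bigl(f(\mathsf{x}) + q(\mathsf{x})\bigr) \\
&\quad\leq \bigl[f(\mathsf{x}^\star) - f(\psi^q(\mathsf{x}^\star))\bigr] + \bigl[r(\psi^r(\mathsf{x}^\star)) - q(\psi^q(\mathsf{x}^\star))\bigr].
\end{align*}
The first bracket is bounded by $L_f \cdot d(\mathsf{x}^\star, \psi^q(\mathsf{x}^\star))$, and since $\mathsf{x}^\star \in \mathscr{X}^r$ while $\psi^q(\mathsf{x}^\star) \in \mathscr{X}^q$ is its nearest neighbour there, one side of the Hausdorff bound yields $d(\mathsf{x}^\star, \psi^q(\mathsf{x}^\star)) \leq \mathcal{H}(\mathscr{X}^r, \mathscr{X}^q) \leq \mathcal{R}(r,q)$. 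The second bracket is bounded by $|r(\psi^r(\mathsf{x}^\star)) - q(\psi^q(\mathsf{x}^\star))| \leq \mathcal{R}(r,q)$ directly from \eqref{eq_cost_metric}, because $\mathsf{x}^\star \in \mathscr{X}^r \cup \mathscr{X}^q$. Adding these gives $(L_f + 1)\cdot \mathcal{R}(r,q)$ as an upper bound for the signed difference.

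To close out, I would repeat the same argument with the roles of $r$ and $q$ interchanged, picking a maximizer of $f + q$ on $\mathscr{X}^q$ and transporting it to $\mathscr{X}^r$ via $\psi^r$, which symmetrically bounds the reverse difference. Combining the two one-sided bounds yields \eqref{eq_ad_prelim}. The only mildly subtle point, and where I would be careful, is the observation that a maximizer $\mathsf{x}^\star \in \mathscr{X}^r$ satisfies $\psi^r(\mathsf{x}^\star) = \mathsf{x}^\star$ so that the absolute-value term in $\mathcal{R}$ matches up with $r(\mathsf{x}^\star)$ exactly; without this, the distribution-piece bound would not be immediate.
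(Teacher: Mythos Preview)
Your proposal is correct and follows essentially the same argument as the paper: pick a maximizer $\mathsf{x}^\star$ of $f+r$ (necessarily in $\mathscr{X}^r$), transport it to $\mathscr{X}^q$ via $\psi^q$, and split the difference into a Lipschitz piece bounded by $L_f\cdot\mathcal{H}(\mathscr{X}^r,\mathscr{X}^q)$ and a distribution piece bounded by the second term in $\mathcal{R}$, using $\psi^r(\mathsf{x}^\star)=\mathsf{x}^\star$. The paper frames the symmetry as two mutually exclusive cases on the sign of the difference rather than ``repeat with $r$ and $q$ interchanged,'' but this is a cosmetic distinction.
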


\begin{proof}
See Appendix D.
\end{proof}

Next, we bound the maximum error when approximating the value functions in the optimal DP \eqref{DP_ad_basic_1} - \eqref{DP_ad_basic_2} with the value functions in the approximate DP \eqref{DP_approx_ad_1} - \eqref{DP_approx_ad_2}.

\begin{theorem} \label{thm_approx_ad_dp}

Let $L_{\hat{V}_{t+1}}$ be the Lipschitz constant of $\hat{V}_{t+1}$ for all $t=0,\dots,T-1$. Then, for all $m_t \in \mathcal{M}_t$ and $u_t \in \mathcal{U}$,
\begin{align}
    |Q_t(m_t,u_t) - \hat{Q}_t(\hat{\sigma}_t(m_t),u_t)| &\leq \alpha_t, \label{thm_7_1} \\
    |V_t(m_t) - \hat{V}_t(\hat{\sigma}_t(m_t))| &\leq \alpha_t, \label{thm_7_2}
\end{align}
where $\alpha_t = \alpha_{t+1} + ( 2 L_t + 1 )\cdot \epsilon_t$, where $L_t := \max\{L_{\hat{V}_{t+1}},$ $L_{c_t}\}$, for all $t=0,\dots,T-1$ and $\alpha_{T} = (L_{c_T} + 1) \cdot \epsilon_T$.
\end{theorem}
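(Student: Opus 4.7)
The plan is to prove both \eqref{thm_7_1} and \eqref{thm_7_2} jointly by backward induction on $t$, starting from $t=T$. For the base case, both $Q_T(m_T, u_T)$ and $\hat Q_T(\hat\sigma_T(m_T), u_T)$ have the form $\max_{x_T \in \mathcal{X}}\bigl(c_T(x_T, u_T) + r_T(x_T|\,\cdot\,)\bigr)$, differing only in whether one conditions on $m_T$ or on $\hat\sigma_T(m_T)$. By \eqref{ap_ad_2} in Definition \ref{def_ad_approx} the $\mathcal{R}$-distance between these two cost distributions is at most $\epsilon_T$, and $c_T(\cdot, u_T)$ is Lipschitz with constant $L_{c_T}$, so Lemma \ref{lem_ad_prelim} delivers $|Q_T(m_T, u_T) - \hat Q_T(\hat\sigma_T(m_T), u_T)| \leq (L_{c_T}+1)\epsilon_T = \alpha_T$. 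The $V_T$ bound then follows from the elementary inequality $|\min_u f(u) - \min_u g(u)| \leq \max_u |f(u) - g(u)|$ applied pointwise.

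For the inductive step, I would assume both \eqref{thm_7_1} and \eqref{thm_7_2} at $t+1$ and establish them at $t$. Starting from \eqref{DP_ad_1}, I first use the inductive hypothesis on $V_{t+1}$ to replace $V_{t+1}(m_{t+1})$ inside the max by $\hat V_{t+1}(\hat\sigma_{t+1}(m_{t+1}))$, incurring an error of at most $\alpha_{t+1}$. The resulting integrand now depends on $m_{t+1}$ only through $\hat\sigma_{t+1}(m_{t+1})$, which permits invoking \eqref{eq_cdist_2} of Lemma \ref{lem_cdist_property} with the map $(x_t, m_{t+1}) \mapsto (x_t, \hat\sigma_{t+1}(m_{t+1}))$ applied to the distribution $r_t(\,\cdot, \cdot \,|m_t, u_t)$; this rewrites the outer maximum over $m_{t+1} \in \mathcal{M}_{t+1}$ as a maximum over $\hat\pi_{t+1} \in \hat{\mathcal{P}}_{t+1}$ against the pushforward $r_t(x_t, \hat\pi_{t+1}|m_t, u_t)$. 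The expression now matches $\hat Q_t(\hat\sigma_t(m_t), u_t)$ in everything except the conditioning of $r_t$, and \eqref{ap_ad_1} bounds the $\mathcal{R}$-distance between the two conditionings by $\epsilon_t$. A second application of Lemma \ref{lem_ad_prelim} with outer function $(x_t, \hat\pi_{t+1}) \mapsto c_t(x_t, u_t) + \hat V_{t+1}(\hat\pi_{t+1})$, whose Lipschitz constant on the product metric is at most $L_{c_t} + L_{\hat V_{t+1}} \leq 2L_t$, contributes an additional $(2L_t+1)\epsilon_t$. Summing the two contributions recovers the recursion $\alpha_t = \alpha_{t+1} + (2L_t+1)\epsilon_t$, and the $V_t$ bound again follows from $|\min_u Q_t - \min_u \hat Q_t| \leq \max_u |Q_t - \hat Q_t|$.

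The main subtlety I anticipate is the change-of-variables step: \eqref{eq_cdist_2} becomes applicable only once the integrand factors through $m_{t+1} \mapsto \hat\sigma_{t+1}(m_{t+1})$, which is precisely why the inductive hypothesis on $V_{t+1}$ must be discharged before invoking Lemma \ref{lem_cdist_property}, and not afterwards. A secondary point requiring care is the Lipschitz bound on the composite outer function of Lemma \ref{lem_ad_prelim}, which follows componentwise on the product metric and depends on the finiteness of $L_{\hat V_{t+1}}$ assumed in the theorem statement.
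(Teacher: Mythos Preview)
Your proposal is correct and follows essentially the same route as the paper's proof: backward induction with the base case handled directly by Lemma \ref{lem_ad_prelim} and \eqref{ap_ad_2}, and the inductive step split via the triangle inequality into an $\alpha_{t+1}$ contribution from the hypothesis on $V_{t+1}$ followed by a $(2L_t+1)\epsilon_t$ contribution from Lemma \ref{lem_ad_prelim} and \eqref{ap_ad_1}. Your explicit mention of the change-of-variables via \eqref{eq_cdist_2} (to pass from the max over $m_{t+1}$ to the max over $\hat{\pi}_{t+1}$ once the integrand depends only on $\hat{\sigma}_{t+1}(m_{t+1})$) is exactly the step the paper uses implicitly when writing its second triangle-inequality term directly in terms of $\hat{\pi}_{t+1}$, and your observation that this must come \emph{after} discharging the induction hypothesis is the right ordering.
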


\begin{proof}
See Appendix E.
\end{proof}

Next, we bound the maximum difference in the performance of an approximate control strategy $\boldsymbol{\hat{g}}^* := (\hat{g}_0^*, \dots, \hat{g}_T^*)$ and optimal strategy $\boldsymbol{g}^*$. Recall that  $\hat{g}_t^*(\hat{\pi}_t) = \arg \min_{u_t \in \mathcal{U}} \hat{Q}_t(\hat{\pi}_t, u_t)$ for all $t=0,\dots,T$. Then, the equivalent strategy $\boldsymbol{g}=(g_0,\dots,g_T)$, which utilizes the memory but yield the same actions and performance as $\boldsymbol{\hat{g}}^*$, is constructed as $g_t(m_t) := \hat{g}_t^*(\hat{\sigma}_t(m_t))$ for all $t$. To compute the performance of $\boldsymbol{g}$ (and consequently, of $\boldsymbol{\hat{g}}^*$), we define for all $t=0,\dots,T-1$, for all $m_t \in \mathcal{M}_t$ and $u_t \in \mathcal{U}$,
\begin{align}
\Theta_t(m_t,u_t) :=  &\max_{x_t \in \mathcal{X}, m_{t+1} \in \mathcal{M}_{t+1}} \big(\Lambda_{t+1}(m_{t+1}) + c_t(x_t,u_t) \nonumber \\
&\quad \quad \quad \quad \quad \quad  + r_t(x_t, m_{t+1}|m_t, u_t) \big), \label{value_g_ad_1}\\
\Lambda_t(m_t) := &\Theta_t(m_t,g_t(m_t)), \label{value_g_ad_2} %= \Theta_t^{\text{tm}}(m_t,\hat{g}_t^\text{tm}\big(\hat{\sigma}_t(m_t))\big),
\end{align}
where, at time $T$, $\Theta_{T}(m_{T}, u_T) := \max_{x_T \in \mathcal{X}}(c_T(x_T,u_T) + r_T(x_T|m_T,u_T))$ and $V_T(m_T) = \Theta_T(m_T, g_T(m_T))$. Recursively evaluating the value functions \eqref{value_g_ad_1} - \eqref{value_g_ad_2} computes the performance of $\boldsymbol{g}$ as $\Lambda_0(m_0)$, where $m_0 = y_0$. Note that the performance of $\boldsymbol{g}^*$ is simply the optimal value. Next, we bound the difference in the performances of $\boldsymbol{g}$ and $\boldsymbol{g}^*$.

\begin{theorem} \label{thm_approx_ad_policy}
Let $L_{\hat{V}_{t+1}}$ be the Lipschitz constant of $\hat{V}_{t+1}$ for all $t=0,\dots,T-1$. Then, for all $m_t \in \mathcal{M}_t$ and $u_t \in \mathcal{U}$,
\begin{align}
    |Q_t(m_t, u_t) - \Theta_t(m_t, u_t)| \leq 2\alpha_t, \label{thm_8_3} \\
    |V_t(m_t) - \Lambda_t(m_t)| \leq 2\alpha_t. \label{thm_8_4}
\end{align}
where $\alpha_t = \alpha_{t+1} + ( 2 L_t + 1 )\cdot \epsilon_t$ with $L_t := \max\{L_{\hat{V}_{t+1}},$ $L_{c_t}\}$ for all $t=0,\dots,T-1$ and $\alpha_{T} = (L_{c_T} + 1) \cdot \epsilon_T$.
\end{theorem}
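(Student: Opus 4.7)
The plan is to reduce Theorem \ref{thm_approx_ad_policy} to Theorem \ref{thm_approx_ad_dp} by first establishing an auxiliary pair of tight bounds comparing the approximate DP values with the realized-performance values, namely that for all $t$, all $m_t \in \mathcal{M}_t$ and $u_t \in \mathcal{U}$,
\begin{align*}
|\hat{Q}_t(\hat{\sigma}_t(m_t), u_t) - \Theta_t(m_t, u_t)| &\leq \alpha_t, \\
|\hat{V}_t(\hat{\sigma}_t(m_t)) - \Lambda_t(m_t)| &\leq \alpha_t,
\end{align*}
using the same $\alpha_t$ recursion as in Theorem \ref{thm_approx_ad_dp}. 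Granting these, the theorem follows in one line by the triangle inequality: $|Q_t - \Theta_t| \leq |Q_t(m_t,u_t) - \hat{Q}_t(\hat{\sigma}_t(m_t), u_t)| + |\hat{Q}_t(\hat{\sigma}_t(m_t), u_t) - \Theta_t(m_t,u_t)| \leq \alpha_t + \alpha_t = 2\alpha_t$, where the first term is exactly the bound of Theorem \ref{thm_approx_ad_dp}; the $V$-bound follows analogously.

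The base case $t = T$ uses only the approximate information state condition \eqref{ap_ad_2} and Lemma \ref{lem_ad_prelim} applied to the Lipschitz function $x_T \mapsto c_T(x_T, u_T)$, giving the $Q$-bound with $\alpha_T = (L_{c_T}+1)\epsilon_T$. The $V$-bound at $T$ is free, because $g_T(m_T) = \hat{g}_T^*(\hat{\sigma}_T(m_T))$ forces $\hat{V}_T(\hat{\sigma}_T(m_T)) = \hat{Q}_T(\hat{\sigma}_T(m_T), g_T(m_T))$ and $\Lambda_T(m_T) = \Theta_T(m_T, g_T(m_T))$ to evaluate their respective $Q$-functions at identical actions, so the $V$-discrepancy inherits the $Q$-discrepancy bound.

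For the inductive step at $t < T$, the key work is the $Q$-bound, which I would prove by a three-step chain starting from $\Theta_t(m_t, u_t) = \max_{x_t, m_{t+1}}\bigl(c_t(x_t,u_t) + \Lambda_{t+1}(m_{t+1}) + r_t(x_t, m_{t+1}|m_t, u_t)\bigr)$. First, invoke the inductive hypothesis $|\Lambda_{t+1}(m_{t+1}) - \hat{V}_{t+1}(\hat{\sigma}_{t+1}(m_{t+1}))| \leq \alpha_{t+1}$ under the outer max, replacing $\Lambda_{t+1}$ by $\hat{V}_{t+1} \circ \hat{\sigma}_{t+1}$ at an additive cost of $\alpha_{t+1}$. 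Second, because the modified integrand depends on $m_{t+1}$ only through $\hat{\sigma}_{t+1}(m_{t+1})$, apply the change-of-variables identity \eqref{eq_cdist_2} in Lemma \ref{lem_cdist_property} to collapse the max over $m_{t+1}$ into a max over $\hat{\pi}_{t+1} \in \hat{\mathcal{P}}_{t+1}$, still conditioned on $m_t$. Third, apply Lemma \ref{lem_ad_prelim} together with the approximate information state condition \eqref{ap_ad_1} to swap the conditioning from $m_t$ to $\hat{\sigma}_t(m_t)$; the integrand $(x_t, \hat{\pi}_{t+1}) \mapsto c_t(x_t, u_t) + \hat{V}_{t+1}(\hat{\pi}_{t+1})$ has Lipschitz constant at most $L_{c_t} + L_{\hat{V}_{t+1}} \leq 2L_t$ under the natural product metric, contributing $(2L_t+1)\epsilon_t$. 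The endpoint of the chain is $\hat{Q}_t(\hat{\sigma}_t(m_t), u_t)$, with accumulated error $\alpha_{t+1} + (2L_t+1)\epsilon_t = \alpha_t$. The $V$-bound at $t$ follows for free as in the base case, because $g_t = \hat{g}_t^* \circ \hat{\sigma}_t$ again forces both $\hat{V}_t(\hat{\sigma}_t(m_t))$ and $\Lambda_t(m_t)$ to plug the same action into their $Q$-functions.

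The main obstacle is producing exactly the coefficient $(2L_t+1)$ rather than something larger in the inductive step: this hinges on bounding the joint Lipschitz constant of $(x_t,\hat{\pi}_{t+1}) \mapsto c_t(x_t,u_t)+\hat{V}_{t+1}(\hat{\pi}_{t+1})$ additively as $L_{c_t}+L_{\hat{V}_{t+1}}$, and on routing the outer triangle inequality through $\hat{Q}_t(\hat{\sigma}_t(m_t),u_t)$ rather than through $Q_t(m_t, g_t(m_t))$, which would otherwise stack a separate suboptimality penalty coming from $g_t \neq g_t^*$ on top of the approximation error supplied by Theorem \ref{thm_approx_ad_dp}.
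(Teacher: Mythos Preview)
Your proposal is correct and follows essentially the same route as the paper's proof: the paper also routes the triangle inequality through $\hat{Q}_t(\hat{\sigma}_t(m_t),u_t)$ (introduced there as $\hat{\Theta}_t$, immediately identified with $\hat{Q}_t$ since $\hat{g}_t^*$ is the argmin), invokes Theorem~\ref{thm_approx_ad_dp} for the first half, and then establishes the auxiliary bound $|\hat{\Theta}_t(\hat{\sigma}_t(m_t),u_t)-\Theta_t(m_t,u_t)|\leq \alpha_t$ by an induction it explicitly says ``follows the same arguments as in Theorem~\ref{thm_approx_ad_dp}.'' Your three-step chain for the inductive step (replace $\Lambda_{t+1}$ by $\hat{V}_{t+1}\circ\hat{\sigma}_{t+1}$, collapse via \eqref{eq_cdist_2}, then apply Lemma~\ref{lem_ad_prelim} with \eqref{ap_ad_1}) is exactly that argument spelled out, and your observation that the $\Lambda$-bound is free because $g_t=\hat{g}_t^*\circ\hat{\sigma}_t$ forces identical actions is the content of the paper's identification $\hat{\Lambda}_t=\hat{V}_t$.
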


\begin{proof}
See Appendix F.
\end{proof}

\section{Numerical Example} \label{section:example}

For our numerical example, we consider an agent pursuing a target across a $9 \times 9$ grid with obstacles. At each $t=0,\dots,T$, the agent's position is $X_t^\text{ag}$ and the target's position is $X_t^{\text{ta}}$, each of which takes values in the set of grid cells $\mathcal{X} = \big\{(-4,-4),(-4,-3),\dots,(3,4),(4,4)\big\} \setminus \mathcal{O}$, where $\mathcal{O} \subset \mathcal{X}$ is a known set of obstacle cells. Let $\mathcal{W} = \mathcal{N} =  \{(-1,0),(1,0),(0,0),(0,1),$ $(0,-1)\}$ and $\mathcal{D} := \{(-1,1), (1,1), (1,-1), (-1,-1) \}$. Starting at $X_0^{\text{ta}} \in \mathcal{X}$, the target's position evolves as $X_{t+1}^{\text{ta}} = \delta(X_t^{\text{ta}} + W_t \in \mathcal{X})\cdot(X_t^{\text{ta}} + W_t) + (1 - \delta(X_t^{\text{ta}} + W_t \in \mathcal{X}) )\cdot X_t^{\text{ta}}$, where $W_t \in \mathcal{W}$ and $\delta(\cdot)$ returns $1$ if the condition in the argument holds and $0$ otherwise. 
At each $t$, the agent observes their own position perfectly and the target's position as $Y_t = \delta(X_t^{\text{ta}} + N_t \in \mathcal{X})\cdot(X_t^{\text{ta}} + N_t) + (1-\delta(X_t^{\text{ta}} + N_t \in \mathcal{X}) )\cdot X_t^{\text{ta}}$, where $N_t \in \mathcal{N}$. 
Then, the agent selects an action $U_t \in \mathcal{U} = \mathcal{W} \cup \mathcal{D}$ and moves as $X_{t+1}^{\text{ag}} = \delta(X_t^{\text{ag}} + U_t \in \mathcal{X})\cdot(X_t^{\text{ag}} + U_t) + (1 - \delta(X_t^{\text{ag}} + U_t \in \mathcal{X}) )\cdot X_t^{\text{ag}}$. 
The agent incurs an interim cost $c_t(U_t) := 0.5 \cdot \delta(U_t \in \mathcal{D})$ only if it moves diagonally, and a terimal cost $d(X_T^{\text{ta}}, X_T^{\text{ag}})$ corresponding to the final distance from the target.
We illustrate this in Fig. \ref{fig:1a}, where: (1) the black cells are obstacles, (2) the black triangle is the initial position of the agent and the hatched region around it indicates the available actions, and (3) the black circle is the initial observation of the agent and the hatched region around it indicates the possible initial positions of the target.

\begin{figure}[t!]
  \centering
  %\begin{multicols}{2}
  %\captionsetup{justification=centering}
  \subfigure[The original grid \label{fig:1a}]{\includegraphics[width=0.35\linewidth, keepaspectratio]{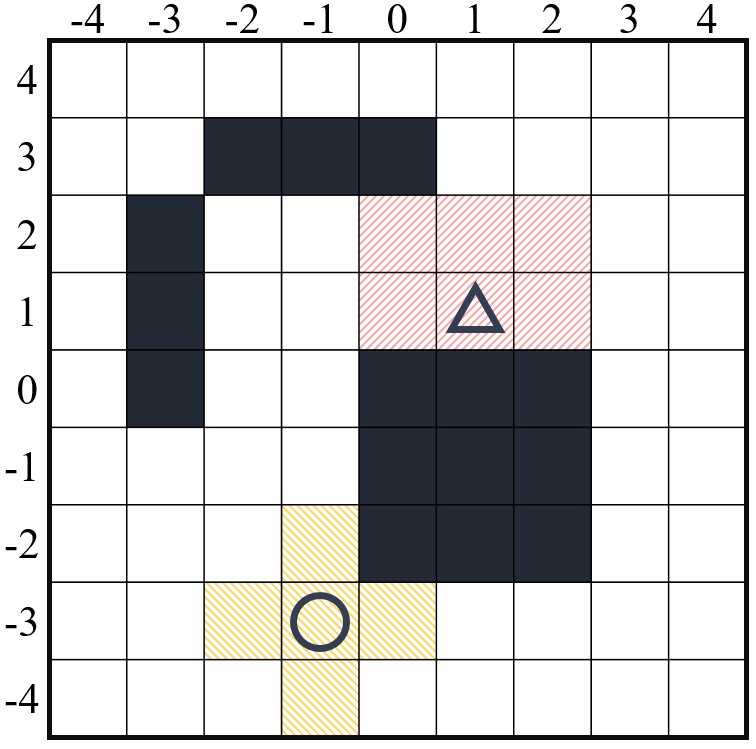}} 
  \hspace{15pt}
  \subfigure[The quantized grid \label{fig:1b}]{\includegraphics[width=0.35\linewidth, keepaspectratio]{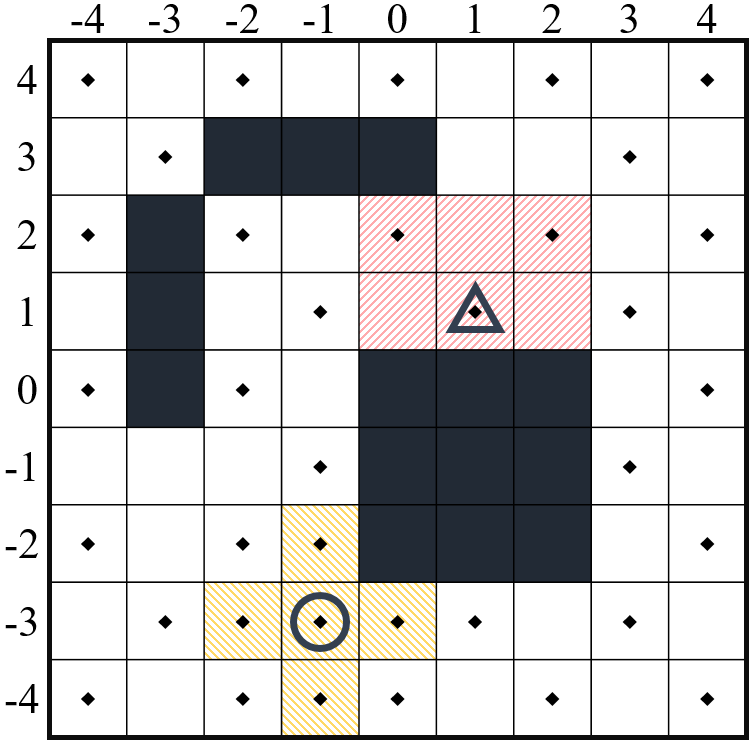}}
  %\end{multicols}
  %\vspace{-6pt}
  \caption{The gridworld pursuit problem with the initial conditions $x_0^{\text{ag}} = (1,1)$ and $y_0 = (-1,-3)$.}
  \label{fig:illustration}
  \vspace{-12pt}
\end{figure}

This formulation is a system with action dependent costs as described in Subsection \ref{subsection:info_examples}. For such a system, an information state at time $t$ is $\Pi_t = (X_t^{\text{ag}}, \Lambda_t)$, where $\Lambda_t = [[X_t^{\text{ta}}|M_t]]$. We approximate $\Lambda_t$ at each $t$ using state quantization. First, we define a static set of quantized states $\hat{\mathcal{X}}$ such that $\max_{x_t \in \mathcal{X}} \min_{\hat{x_t} \in \hat{\mathcal{X}}} d(x_t,$ $ \hat{x}_t) \leq 1$ and a quantization function $\mu(x_t) := \arg \min_{\hat{x}_t \in \hat{\mathcal{X}}} d(x_t, \hat{x}_t)$ using the initial observation of the agent, as illustrated using dots in Fig. \ref{fig:1b}. Note that we use a finer quantization around the point of initial observation and sparser quantization elsewhere. 
Then, the approximate range at time $t$ is $\hat{\Lambda}_t  = \{ \mu(x_t) \in \hat{\mathcal{X}} ~|~ x_t \in \Lambda_t \}$ and the approximate information state is $\hat{\Pi}_t = \big(X_t^{\text{ag}}, \hat{\Lambda}_t, Y_0\big)$. We include $Y_0$ in $\hat{\Pi}_t$ because it facilitates the update of $\hat{\Lambda}_t$ to $\hat{\Lambda}_{t+1}$. For six initial conditions, we computed the best control strategy using both the optimal DP and approximate DP for $T=6$. In Fig. \ref{fig:table}, we have tabulated the worst-case values ($V_0$ and $\hat{V}_0$) and run-times in seconds (Run.) for both DPs. %Note that the approximate DP is significantly faster than the optimal DP in all cases. Next, we 
We also evaluated the difference between \textit{actual} costs incurred by the approximate strategy and the optimal strategy, respectively, by implementing both of them in 5000 simulations with randomly generated disturbances. We have marked these differences in Fig. \ref{fig:table} and indicated the frequency of each cost difference by the size of the disc marking it. While the approximate strategy is faster to compute than the optimal strategy for all cases, we note that it admits bounded deviations in actual costs. 

\begin{figure}[ht!]
  \centering
  %\captionsetup{justification=centering}
  \includegraphics[width=\linewidth, keepaspectratio]{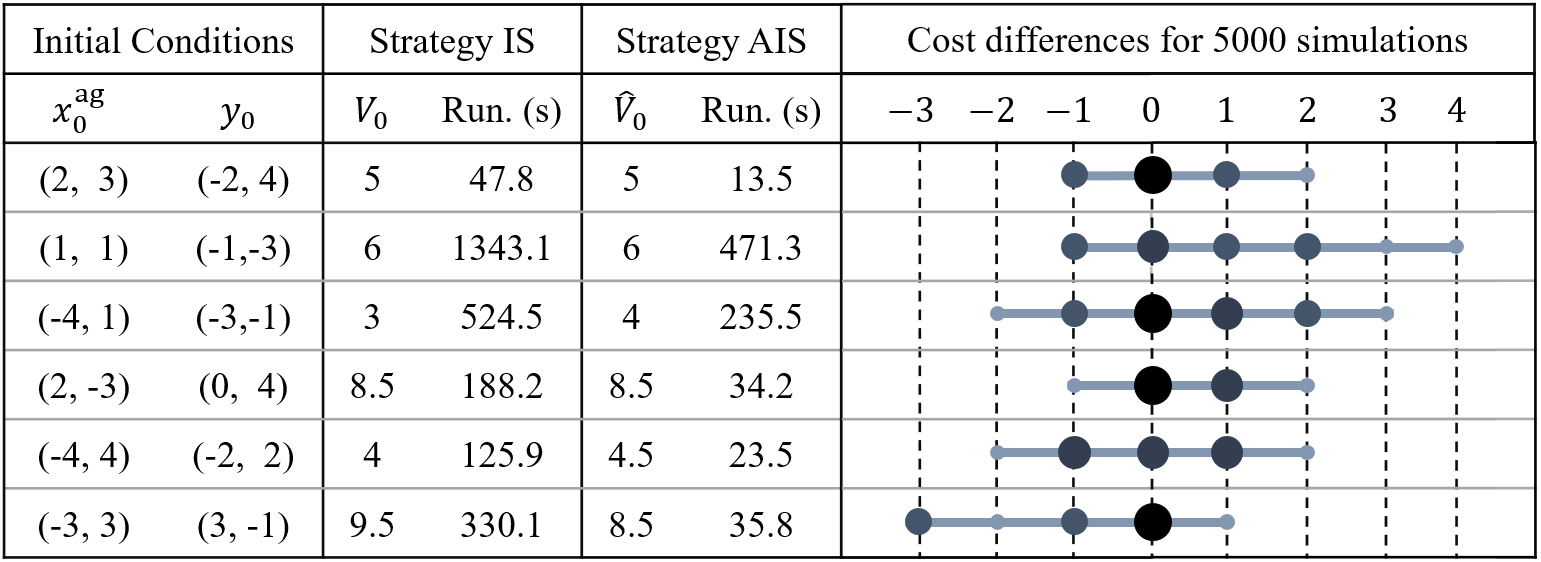}
  \vspace{-10pt}
  \caption{Results of numerical simulations for $T=6$.}
  \label{fig:table}
  \vspace{-12pt}
\end{figure}

\section{Conclusion} \label{section:conclusion}

In this paper, we developed a general theory of information states and approximate information states to tractably compute control strategies in non-stochastic additive cost problems. We used the theoretical framework of cost distributions to present a general definition for information states that compute an optimal control strategy. We showed that specific information states proposed in previous research efforts emerge as special cases of our definition. Then, we extended this definition to approximate information states which can be used to compute approximate control strategies which admit a bounded worst-case performance loss. Finally, using a numerical simulation, we illustrated the trade-off between computational tractability and performance loss inherent in the application of approximate information states. Future work should consider the use of this theory in non-stochastic reinforcement learning problems.

\bibliographystyle{ieeetr}
\bibliography{References,Latest_IDS}

\section*{Appendix A - Proof of Theorem 1}

For all $t=0,\dots,T$, let $m_t \in \mathcal{M}_t$ and $u_t \in \mathcal{U}$ be given realizations of $M_t$ and $U_t$, respectively. We prove the result by mathematical induction starting at the last time step $T$. We use \eqref{max_indicator} to expand the left hand side (LHS) of \eqref{eq_thm_ad_m_1} as
${Q}_T^{\text{tm}}(m_T,u_T) = \max_{a_T, x_T \in [[A_T, X_T|m_T, u_T]]}$ $\big(c_T(x_T,u_T\big) + a_T)$
    $= \max_{a_T \in \mathcal{A}_T, x_T \in \mathcal{X}} \big( x_T + a_T + \mathbb{I}(a_T,x_T|m_T, u_T) \big).$
In the RHS, we add and subtract $\max_{a_T \in \mathcal{A}_T}(a_T + \mathbb{I}(a_T|m_T))$ to write that
${Q}_T^{\text{tm}}(m_T,u_T)  =  \max_{x_T \in \mathcal{X}}   \big( c_T(x_T,u_T) +  \max_{a_T \in \mathcal{A}_T}  \big(a_T + \mathbb{I}(a_T, x_T$
    $|m_T)\big) -  \max_{a_T \in \mathcal{A}_T}  \big( a_T \hspace{-2pt} +  \mathbb{I}(a_T|m_T)\big) \big)   +  \max_{a_T \in \mathcal{A}_T}  \big( a_T  +  \mathbb{I}(a_T |m_T) \big)$ 
   $=  \max_{x_T \in \mathcal{X}}  \big( c_T(x_T,u_T)  +  r_T(x_T|m_T)\big)  +  \max_{a_T \in [[A_T|m_T]]}  a_T,$
where, in the second equality, we use \eqref{def_r} from Definition \ref{def_accrued_cost}. Thus, using the definition of $Q_T(m_T, u_T)$, we complete the proof for \eqref{eq_thm_ad_m_1} at time $T$. We can prove \eqref{eq_thm_ad_m_2} at time $T$ directly by minimizing both sides of \eqref{eq_thm_ad_m_1} with respect to $u_T \in \mathcal{U}$. Furthermore, note that $g_T(m_T) = \arg\inf_{u_T \in \mathcal{U}}{Q}_T(m_T,u_T) = \arg\inf_{u_T \in \mathcal{U}}$ ${Q}_T^{\text{tm}}(m_T,u_T)$, i.e., $u_T = g_T^*(m_T)$ minimizes ${Q}_T^{\text{tm}}(m_T,u_T)$. This forms the basis of our induction. Next, for all $t=0,\dots,T-1$, we consider the induction hypothesis $V_{t+1}^{\text{tm}}(m_{t+1}) = V_{t+1}(m_{t+1}) + \max_{a_{t+1} \in [[A_{t+1}|m_{t+1}]]} a_{t+1}$. 
Then, using the hypothesis $Q_{t}^{\text{tm}}(m_t,u_t) = \max_{m_{t+1} \in [[M_{t+1}|m_t,u_t]]} {V}_{t+1}^{\text{tm}}(m_{t+1}) = \max_{m_{t+1} \in [[M_{t+1}|m_t,u_t]]}$ $\big( V_t(m_{t+1})  + \max_{a_{t+1} \in [[A_{t+1}|m_{t+1}]]}$ $a_{t+1} \big)$
    $= \max_{m_{t+1}, a_{t+1} \in [[M_{t+1},A_{t+1}|m_t,u_t]]} 
    \big(V_t(m_{t+1}) + a_{t+1} \big) = \max_{m_{t+1}, x_{t}, a_{t} \in [[M_{t+1}, X_{t}, A_{t}|m_t,u_t]]}\big(V_t (m_{t+1})$
    $ + c_{t} (x_t,u_t) + a_{t} )$
    $= \max_{m_{t+1} \in \mathcal{M}_{t+1}, x_{t} \in \mathcal{X}_{t+1}, a_{t} \in \mathcal{A}_{t+1}}$
    $ \big(V_t(m_{t+1}) + c_{t}(x_t,u_t)$
    $+ a_{t} + \mathbb{I}(x_t, m_{t+1}, a_t|m_t, u_t) \big),$
where, in the third equality, we use the fact that $[[A_{t+1}|m_{t+1}]] = [[A_{t+1}|m_{t+1}, m_t, u_t]]$ because $m_{t+1} = (m_t, u_t, y_{t+1})$; in the fourth equality, we use the definition of $a_{t+1}$; and in the fifth equality, we use the property of the of the indicator function. Then, as for time $T$, we add and subtract $\max_{a_t \in \mathcal{A}_t}(a_t+\mathbb{I}(a_t|m_t))$ in the RHS and use \eqref{def_r} from Defintion \ref{def_accrued_cost} to conclude that
$Q_{t}^{\text{tm}}(m_t,u_t) = \max_{m_{t+1} \in \mathcal{M}_{t+1}, x_{t} \in \mathcal{X}_{t+1}} \big(V_t(m_{t+1}) + c_{t}(x_t,u_t)$
    $+ r_t(x_t, m_{t+1}|m_t,u_t) \big) -
    \max_{a_t \in \mathcal{A}_t}\big(a_{t} + \mathbb{I}(a_t|m_t) \big)$
    $= Q_t(m_t,u_t) + \max_{a_t \in [[A_t|m_t]]} a_t,$
which proves \eqref{eq_thm_ad_m_1} at time $t$. We can prove \eqref{eq_thm_ad_m_2} at time $t$ directly by minimizing both sides of \eqref{eq_thm_ad_m_1} respect to $u_t \in \mathcal{U}$, and furthermore, $g_t^*(m_t)$ $ = \arg\inf_{u_t \in \mathcal{U}}{Q}_t(m_t,u_t) = \arg\inf_{u_t \in \mathcal{U}}{Q}_t^{\text{tm}}(m_t,u_t)$. This proves the induction hypothesis at time $t$ and thus, the result holds for all $t=0,\dots,T$ using mathematical induction.

\section*{Appendix B - Proof of Theorem 2}

Let $m_t \in \mathcal{M}_t$ and $u_t \in \mathcal{U}$ be given realizations of $M_t$ and $U_t$, respectively, for all $t = 0,\dots,T$. We prove the result using mathematical induction starting with $T$, where 
$Q_{T}(m_{T}, u_T) = \max_{x_T \in \mathcal{X}} \big(c_T(x_T, u_T) + r_T(x_T|m_T)\big) = \max_{x_T \in \mathcal{X}} \big(c_T(x_T, u_T) + r_T(x_T|\sigma_t(m_T))\big) = \bar{Q}_{T}(\sigma_{T}(m_{T}), u_T)$ 
holds as a direct consequence of \eqref{p_ad_final} in Definition \ref{def_info_ad_state}. Subsequently, by taking the minimum on both sides with respect to $u_t \in \mathcal{U}$, it holds that $V_T(m_T) = \bar{V}_T(\sigma_T(m_T))$. 
With this as the basis, for each $t=0,\dots,T-1$, we consider the induction hypothesis $V_{t+1}(m_{t+1}) = \bar{V}_{t+1}(\sigma_{t+1}(m_{t+1}))$. Next, we prove that ${Q}_t(m_t, u_t) = \bar{Q}_t(\sigma_t(m_t), u_t)$ at time $t$ by showing that the RHS of \eqref{DP_ad_1} is equal to the RHS of \eqref{DP_info_ad_1}. Using the induction hypothesis in the RHS of \eqref{DP_ad_1}, $Q_t(m_t, u_t) = \max_{x_t \in \mathcal{X}, m_{t+1} \in \mathcal{M}_{t+1}} \big( V_{t+1}(m_{t+1}) + c_t(x_t, u_t)
    + r_t(x_t, m_{t+1}|m_t,u_t)\big) 
    = \max_{x_t \in \mathcal{X}, m_{t+1} \in \mathcal{M}_{t+1}}$ $\big( \bar{V}_{t+1}(\sigma_{t+1}(m_{t+1})) + c_t(x_t, u_t) 
    + r_t(x_t, m_{t+1}|m_t,u_t)\big)
    = \max_{x_t \in \mathcal{X}, \pi_{t+1} \in \mathcal{P}_{t+1}} \big( \bar{V}_{t+1}(\pi_{t+1}) + c_t(x_t, u_t)  
    + r_t(x_t, \pi_{t+1}|\pi_t,u_t) \big),$
where, in the second equality, we use result 2 from Lemma \ref{lem_cdist_property} and \eqref{p_ad_t}. Thus, at time $t$, it holds that $Q_t(m_t, u_t) = \bar{Q}_t(\sigma_t(m_t), u_t)$. Subsequently, we can prove $V_t(m_t) = \bar{V}_t(\sigma_t(m_t))$ by minimizing both sides with $u_t \in \mathcal{U}$. This proves the induction hypothesis at time $t$, and the result follows by mathematical induction.

\section*{Appendix C - Derivation of Information State for Systems with Action Dependent Costs}

In this appendix, we derive the information states for partially observed systems with control dependent costs as described in Subsection \ref{subsection:info_examples}. We recall that for a general partially observed system, the information state at each $t=0,\dots,T$ is given by the function $\Pi_t: \mathcal{X}_t \to \{-\infty\} \cup [-a_t^{\max},0]$. Given a realization $m_t \in \mathcal{M}_t$ of the memory $M_t$ at any time $t$, it takes as its realization the functional form $p_t(X_t) = r_t(X_t|m_t)$. Next, we prove an important result to establish the information state.

\begin{lemma} \label{lem_ad_case_4}
Let the incurred cost at each $t=0,\dots,T-1$ be $c_t(U_t) \in \mathbb{R}_{\geq0}$. Then, for any $m_t\in \mathcal{M}_t$ and $x_t \in \mathcal{X}_t$, it holds that $r_t(x_t|m_t) = \mathbb{I}(x_t|m_t)$.
\end{lemma}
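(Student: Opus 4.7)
The plan is to exploit the fact that, because each stage cost $c_\ell(u_\ell)$ for $\ell < t$ depends only on the action and every past action is recorded in the memory $m_t = (y_{0:t}, u_{0:t-1})$, the accrued cost $A_t$ becomes a deterministic function of $m_t$. Concretely, I would set $\bar{a}_t(m_t) := \sum_{\ell=0}^{t-1} c_\ell(u_\ell)$ and observe that for every $\omega \in \Omega$ with $M_t(\omega) = m_t$ we have $A_t(\omega) = \bar{a}_t(m_t)$. Hence $[[A_t|m_t]] = \{\bar{a}_t(m_t)\}$ is a singleton, and by Definition \ref{def_ind} the conditional indicator $\mathbb{I}(a_t|m_t)$ equals $0$ only at $a_t = \bar{a}_t(m_t)$ and equals $-\infty$ elsewhere.

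Next, I would substitute this into \eqref{def_r}. The subtracted term $\max_{a_t \in \mathcal{A}_t}(a_t + \mathbb{I}(a_t|m_t))$ collapses to $\bar{a}_t(m_t)$. For the remaining term, I split into two cases. If $x_t \in [[X_t|m_t]]$, then picking any $\omega$ with $X_t(\omega) = x_t$ and $M_t(\omega) = m_t$ gives $(x_t, \bar{a}_t(m_t)) \in [[X_t, A_t|m_t]]$, so $\max_{a_t \in \mathcal{A}_t}(a_t + \mathbb{I}(x_t, a_t|m_t)) = \bar{a}_t(m_t)$, and the difference in \eqref{def_r} is $0$. If $x_t \notin [[X_t|m_t]]$, then $(x_t, a_t) \notin [[X_t, A_t|m_t]]$ for every $a_t \in \mathcal{A}_t$, so the first max evaluates to $-\infty$, yielding $r_t(x_t|m_t) = -\infty$ under the convention that $-\infty$ minus a finite quantity remains $-\infty$. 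Both outcomes coincide exactly with $\mathbb{I}(x_t|m_t)$ from Definition \ref{def_ind}.

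The hard part, such as it is, is purely notational rather than conceptual: I need to be careful with the extended-real arithmetic when $(x_t, a_t)$ is infeasible, and I need to justify cleanly that $(x_t, \bar{a}_t(m_t))$ sits in the joint conditional range whenever $x_t$ sits in the marginal one. Both are direct consequences of the functional relation $A_t = \bar{a}_t(M_t)$ established at the outset, so the entire argument reduces to a clean two-case evaluation of Definition \ref{def_accrued_cost}.
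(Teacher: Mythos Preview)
Your proposal is correct and is essentially the same argument as the paper's: both hinge on the observation that $A_t$ is a deterministic function of $u_{0:t-1}$ (the paper writes $a_t=\bar{c}_t(u_{0:t-1})$, you write $\bar{a}_t(m_t)$), so $[[A_t|m_t]]$ is a singleton and the two maxima in \eqref{def_r} collapse. The only cosmetic difference is that you spell out the two cases $x_t\in[[X_t|m_t]]$ and $x_t\notin[[X_t|m_t]]$ explicitly, whereas the paper encodes both cases in the single line $\bar{c}_t(u_{0:t-1})+\mathbb{I}(x_t|m_t)-\bar{c}_t(u_{0:t-1})$.
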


\begin{proof}
Let $m_t \in \mathcal{M}_t$ and $x_t \in \mathcal{X}_t$ be realizations of the uncertain variables $M_t$ and $X_t$, respectively, at each $t=0,\dots,T$. Let $m_t = (y_{0:t}, u_{0:t-1})$ at time $t$. Then, we note that there exists a known function $\bar{c}_t: \prod_{\ell=0}^{t-1}\mathcal{U}_{\ell} \to \mathcal{A}_t$ such that $a_t = \bar{c}_t(u_{0:t-1})$. We use this property to write that
\begin{align}
    \hspace{-5pt} r_t(x_t|m_t) \hspace{-3pt} = &\hspace{-3pt}  \max_{a_t \in \mathcal{A}_t} \hspace{-1pt} \big(a_t \hspace{-1pt} + \hspace{-1pt} \mathbb{I}(x_t,a_t|m_t)\big) \hspace{-1pt} - \hspace{-1pt} \max_{a_t \in \mathcal{A}_t} \hspace{-1pt} \big(a_t + \mathbb{I}(a_t|m_t)\big) \nonumber \\
    = &\bar{c}_t(u_{0:t-1}) + \mathbb{I}(x_t|y_{0:t},u_{0:t-1}\big) - \bar{c}_t(u_{0:t-1}) \nonumber \\ 
    = &\mathbb{I}(x_t|y_{0:t}, u_{0:t-1}\big) = \mathbb{I}(x_t|m_t),
\end{align}
where, in the second equality, we use the fact that $\max_{a_t \in \mathcal{A}_t} \big(a_t + \mathbb{I}(x_t,a_t|m_t)\big) = \max_{a_t \in [[A_t|m_t]]} \big(a_t + \mathbb{I}(x_t|a_t, m_t)\big)$ and $[[A_t|m_t]] = \{\bar{c}_t(u_{0:t-1})\}$.
\end{proof}

As a direct consequence of Lemma \ref{lem_ad_case_4}, for a given realization of the memory $m_t \in \mathcal{X}_t$ at time $t$, the realization of the information state for a perfectly observed system is the function form of the indicator function $\mathbb{I}(X_t|m_t)$, where for all $x_t \in \mathcal{X}$,
\begin{gather} \label{ap_basic_1}
    \mathbb{I}(x_t|m_t) =
    \begin{cases}
         0, \quad \quad \;  \text{ if } x_t \in [[X_t|m_t], \\
        -\infty, \quad \text{ if } x_t \not\in [[X_t|m_t].
    \end{cases}
\end{gather}
From \eqref{ap_basic_1}, note that the functional form $\mathbb{I}(X_t|m_t) = \mathbb{I}(X_t|[[X_t|m_t]])$, and thus, at each time $t = 0,\dots,T$, given the realized memory $m_t \in \mathcal{M}_t$, it is sufficient to simply track the conditional range $[[X_t|m_t]]$ to derive the information state $r_t(X_t|m_t)$ for all $x_t \in \mathcal{X}$. This implies that $[[X_t|m_t]]$ satisfies all the properties of an information state.

\section*{Appendix D - Proof of Lemma 2}

We prove this result by considering two cases which are mutually exclusive but cover all the possibilities.
\textit{Case 1:} $\max_{\mathsf{x} \in \mathscr{X}}\big(f(\mathsf{x}) + r(\mathsf{x}) \big) \geq \max_{\mathsf{x} \in \mathscr{X}}\big(f(\mathsf{x}) + q(\mathsf{x})\big)$, which implies that $\big|\max_{\mathsf{x} \in \mathscr{X}}\big(f(\mathsf{x}) + r(\mathsf{x}) \big) - \max_{\mathsf{x} \in \mathscr{X}}\big(f(\mathsf{x}) + q(\mathsf{x})\big)\big| = \max_{\mathsf{x} \in \mathscr{X}}\big(f(\mathsf{x}) + r(\mathsf{x}) \big) - \max_{\mathsf{x} \in \mathscr{X}}\big(f(\mathsf{x}) + q(\mathsf{x})\big)$. We define a variable $\mathsf{x}^* \in \mathscr{X}^r$ such that $\mathsf{x}^* := \arg \max_{\mathsf{x} \in \mathscr{X}}\big(f(\mathsf{x}) + r(\mathsf{x}) \big)$ and a function $\psi^i: \mathscr{X} \to \mathscr{X}^i$ such that $\psi^i(\mathsf{x}) := \arg \min_{\tilde{\mathsf{x}} \in \mathscr{X}^i}d(\mathsf{x}, \tilde{\mathsf{x}})$ for each $i = r, q$. Then,
    $\max_{\mathsf{x} \in \mathscr{X}}\big(f(\mathsf{x}) + r(\mathsf{x}) \big) - \max_{\mathsf{x} \in \mathscr{X}}\big(f(\mathsf{x}) + q(\mathsf{x})\big)$ 
    $= f(\mathsf{x}^*) + r(\mathsf{x}^*) - \max_{\mathsf{x} \in \mathscr{X}}\big(f(\mathsf{x}) + q(\mathsf{x})\big)$
    $\leq f(\mathsf{x}^*) + r(\mathsf{x}^*) - f\big(\psi^q(\mathsf{x}^*)\big) - q\big(\psi^q(\mathsf{x}^*)\big)$
    $\leq L_f \cdot d(\mathsf{x}^*,\psi^q(\mathsf{x}^*)) + \big|r(\mathsf{x}^*) - q\big(\psi^q(\mathsf{x}^*)\big)\big|$
    $\leq L_f \cdot \mathcal{H}(\mathscr{X}^r, \mathscr{X}^q) + \max_{\mathsf{x} \in \mathscr{X}^r \cup \mathscr{X}^q}\big|r\big(\psi^r(\mathsf{x})\big) - q\big(\psi^q(\mathsf{x})\big)\big|$
    $\leq L_f \cdot \mathcal{R}\big(r,q\big) + \mathcal{R}\big(r,q\big),$
where, in the first inequality, we use the fact that $q(\psi^q(\mathsf{x}^*)) \neq - \infty$; in the second inequality, we use the Lipschitz continuity of $f$; in the third inequality, we use the definition of the Hausdorff metric from \eqref{H_met_def} and the fact that $\max_{x \in \mathcal{X}^r}|r(\mathsf{x}) - q(\psi^q(\mathsf{x}))| = \max_{\mathsf{x} \in \mathscr{X}^r \cup \mathscr{X}^q}|r(\psi^r(\mathsf{x})) - q(\psi^q(\mathsf{x}))|$; and in the fourth inequality, we use \eqref{eq_cost_metric} from Definition \ref{def_cost_metric}.
\textit{Case 2:} $\max_{\mathsf{x} \in \mathscr{X}}\big(f(\mathsf{x}) + r(\mathsf{x}) \big) < \max_{\mathsf{x} \in \mathscr{X}}\big(f(\mathsf{x}) + q(\mathsf{x})\big)$, where the result holds using the same arguments as Case 1.

\section*{Appendix E - Proof of Theorem 3}

For all $t=0,\dots,T$, let $m_t \in \mathcal{M}_t$ and $u_t \in \mathcal{U}$ be realizations of $M_t$ and $U_t$, respectively.
We prove both results by mathematical induction, starting with time step $T$. At $T$, we directly use \eqref{eq_ad_prelim} from Lemma \ref{lem_ad_prelim} and \eqref{ap_ad_2} from Definition \ref{def_ad_approx} to conclude that $|Q_T(m_T,u_T) - \hat{Q}_T(\hat{\sigma}_t(m_T),u_T)| \leq (L_{c_T} + 1) \cdot \epsilon_T$. 
Furthermore, minimizing both terms in the LHS of \eqref{thm_7_1} yields $|V_T(m_T) - \hat{V}_T(\hat{\sigma}_t(m_T))| \leq \max_{u_T \in \mathcal{U}}|Q_T(m_T,u_T) - \hat{Q}_T(\hat{\sigma}_t(m_T),u_T)| \leq (L_{c_T} + 1) \cdot \epsilon_T$. This forms the basis of our mathematical induction. Then, at each $t=0,\dots,T-1$, we consider the induction hypothesis $|V_{t+1}(m_{t+1}) - \hat{V}_{t+1}(\hat{\sigma}_{t+1}(m_{t+1}))| \leq \alpha_{t+1}$ and first prove \eqref{thm_7_1}. Using the triangle inequality,
    $\leq \big| \max_{x_t \in \mathcal{X}, m_{t+1} \in \mathcal{M}_{t+1}} ( {V}_{t+1}(m_{t+1}) +c_t(x_t,u_t) + r_t(x_t, m_{t+1}|m_t, u_t))$
    $- \max_{x_t \in \mathcal{X}, m_{t+1} \in \mathcal{M}_{t+1}}$ $( \hat{V}_{t+1}(\hat{\sigma}_{t+1}(m_{t+1})) +c_t(x_t,u_t) + r_t(x_t, m_{t+1}|m_t,$
    $u_t)) \big|  +  \big| \max_{x_t \in \mathcal{X}, \hat{\pi}_{t+1} \in \hat{\mathcal{P}}_{t+1}}( \hat{V}_{t+1}(\hat{\pi}_{t+1}) +c_t(x_t,u_t) + r_t(x_t, \hat{\pi}_{t+1}|m_t,    u_t) )$
    $- \; \; \max_{x_t \in \mathcal{X}, \hat{\pi}_{t+1} \in \hat{\mathcal{P}}_{t+1}}  ( \hat{V}_{t+1}(\hat{\pi}_{t+1}) +c_t(x_t,u_t) + r_t(x_t, \hat{\pi}_{t+1}|\hat{\sigma}_t(m_t), u_t)  ) \big|.$
Here, for the first term in the RHS,
    $\big| \max_{x_t \in \mathcal{X}, m_{t+1} \in \mathcal{M}_{t+1}} ( {V}_{t+1}(m_{t+1}) +c_t(x_t,u_t) + r_t(x_t, m_{t+1}|m_t, u_t) )$
    $- \max_{x_t \in \mathcal{X}, m_{t+1} \in \mathcal{M}_{t+1}}$ $\big ( \hat{V}_{t+1}(\hat{\sigma}_{t+1}(m_{t+1})) +c_t(x_t,u_t) + r_t(x_t, m_{t+1}|m_t, u_t)) \big|$
    $\leq \max_{x_t \in \mathcal{X}, m_{t+1} \in \mathcal{M}_{t+1}}\big|{V}_{t+1}(m_{t+1}) - \hat{V}_{t+1}(\hat{\sigma}_{t+1}(m_{t+1}))\big| \leq \alpha_{t+1},$
where in the second inequality, we use the induction hypothesis.
Furthermore, in the second term in the RHS, we directly use \eqref{eq_ad_prelim} from \ref{lem_ad_prelim} and \eqref{ap_ad_1} from Definition \ref{def_ad_approx} to conclude that
$\big| \max_{x_t \in \mathcal{X}, \hat{\pi}_{t+1} \in \hat{\mathcal{P}}_{t+1}} ( \hat{V}_{t+1}(\hat{\pi}_{t+1}) +c_t(x_t,u_t) + r_t(x_t, \hat{\pi}_{t+1}|m_t, u_t) )$
    $- \max_{x_t \in \mathcal{X}, \hat{\pi}_{t+1} \in \hat{\mathcal{P}}_{t+1}}$ $( \hat{V}_{t+1}(\hat{\pi}_{t+1}) +c_t(x_t,u_t) + r_t(x_t, \hat{\pi}_{t+1}|\hat{\sigma}_t(m_t), u_t) ) \big|$
    $\leq \big(2 L_t + 1 \big) \cdot \epsilon_t,$
where, $L_t = \max\{L_{\hat{V}_{t+1}}, L_{c_t}\}$ and $2L_t$ is the Lipschitz constant for the function $\phi(\hat{\pi}_{t+1}, x_t) := \hat{V}_{t+1}(\hat{\pi}_{t+1}) + c_t(x_t, u_t)$ with respect to the variables $(\hat{\pi}_{t+1}, x_t)$ for all $u_t \in \mathcal{U}$. Combining results for each term in the RHS completes the proof for \eqref{thm_7_1} at time $t$. Next, we prove \eqref{thm_7_2}. Using the definition of the value functions in the LHS of \eqref{thm_7_2}, $|V_t(m_t) - \hat{V}_t(\hat{\sigma}_t(m_t))| = |\min_{u_t \in \mathcal{U}}Q_t(m_t, u_t) - \min_{u_t \in \mathcal{U}}$
    $\hat{Q}_t(\hat{\sigma}_t(m_t), u_t) |
    \leq \max_{u_t \in \mathcal{U}}|Q_t(m_t, u_t) - \hat{Q}_t(\hat{\sigma}_t(m_t), u_t)|$
    $\leq \alpha_t,$
where in the second inequality, we use \eqref{thm_7_1}. This proves the induction hypothesis at time $t$. Thus, the results hold for all $t=0,\dots,T$ using mathematical induction.

\section*{Appendix F - Proof of Theorem 4}

We begin by recursively defining the value functions which compute the performance of the strategy $\boldsymbol{\hat{g}}$. 
For all $t=0,\dots,T-1$ and for each $\hat{\pi}_t \in \hat{\mathcal{P}}_t$ and $u_t \in \mathcal{U}$, let
$\hat{\Theta}_t (\hat{\pi}_t, u_t) := \max_{x_t \in \mathcal{X}, \hat{\pi}_{t+1} \in \hat{\mathcal{P}}_{t+1}} \big (\hat{\Lambda}_{t+1}(\hat{\pi}_{t+1}) + c_t(x_t,u_t) + r_t(x_t, \hat{\pi}_{t+1}|\hat{\pi}_t,u_t) \big)$ and
    $\hat{\Lambda}_t(\hat{\pi}_t):= \hat{\Theta}_t(\hat{\pi}_t, \hat{g}_t(\hat{\pi}_t))$;
where, at time $T$, $\hat{\Theta}_T(\hat{\pi}_T, u_T) := \max_{x_T \in \mathcal{X}}(c_T(x_T,u_T) + r_T(x_T|m_T,u_T))$ and $\hat{\Lambda}_{T}(\hat{\pi}_{T}) :=\hat{\Theta}_T(\hat{\pi}_t, \hat{g}_T(\hat{\pi}_T))$. Note that $\hat{\Theta}_t(\hat{\pi}_t, u_t) = \hat{Q}_{t}(\hat{\pi}_t, u_t)$ and  $\hat{\Lambda}_t(\hat{\pi}_t) = \hat{V}_t(\hat{\pi}_t),$
for all $t=0,\dots,T$, since $\hat{g}_t(\hat{\pi}_t)  =\arg\min_{u_t \in \mathcal{U}}\hat{Q}_t(\hat{\pi}_t,u_t)$.
Next, we use the triangle inequality in the LHS of \eqref{thm_8_3} at any $t$ to write %$|Q_t(m_t, u_t) - \Theta_t(m_t, u_t)| \leq |Q_t(m_t, u_t) -$
    $\hat{Q}_t(\hat{\sigma}_t(m_t),u_t)| + |\hat{\Theta}_t(\hat{\sigma}_t(m_t), u_t) - \Theta_t(m_t, u_t)|$
    $\leq \alpha_t  + |\hat{\Theta}_t(\hat{\sigma}_t(m_t), u_t) - \Theta_t(m_t, u_t)|,$
where, in the second inequality, we use \eqref{thm_7_1} from Theorem \ref{thm_approx_ad_dp}. Then, to prove \eqref{thm_8_3}, it suffices to show that $|\hat{\Theta}_t(\hat{\sigma}_t(m_t), u_t) - \Theta_t(m_t, u_t)| \leq \alpha_t$. We can show this in addition to $|\hat{\Lambda}_t(\hat{\sigma}_t(m_t)) - \Lambda_t(m_t)| \leq \alpha_t$ for all $t=0,\dots,T$ using mathematical induction and following the same arguments as in Theorem \ref{thm_approx_ad_dp}.

\end{document}